\definecolor{darkred}{rgb}{1,0,0} 
\definecolor{darkgreen}{rgb}{0,0.8,0}
\definecolor{darkblue}{rgb}{0,0,1}
\newcommand{\labell}[1] {\label{#1}}
\numberwithin{equation}{section}
\newtheorem {Theorem}{Theorem}
\numberwithin{Theorem}{section}
\newtheorem {Lemma}[Theorem]    {Lemma}
\newtheorem {Proposition}[Theorem]{Proposition}
\newtheorem {Corollary}[Theorem]{Corollary}
\theoremstyle{definition}
\newtheorem{Definition}[Theorem]{Definition}
\theoremstyle{remark}
\newtheorem{Remark}[Theorem]{Remark}
\newtheorem{Example}[Theorem]{Example}
\chardef\csname pre amssym.def at\endcsname=\the\catcode`\@
\def\undefine#1{\let#1\undefined}
\def\newsymbol#1#2#3#4#5{\let\next@\relax
 \ifnum#2=\@ne\let\next@\msafam@\else
 \ifnum#2=\tw@\let\next@\msbfam@\fi\fi
 \mathchardef#1="#3\next@#4#5}
\def\mathhexbox@#1#2#3{\relax
 \ifmmode\mathpalette{}{\m@th\mathchar"#1#2#3}%
 \else\leavevmode\hbox{$\m@th\mathchar"#1#2#3$}\fi}
\def\hexnumber@#1{\ifcase#1 0\or 1\or 2\or 3\or 4\or 5\or 6\or 7\or 8\or
 9\or A\or B\or C\or D\or E\or F\fi}
\font\teneufm=eufm10
\font\seveneufm=eufm7
\font\fiveeufm=eufm5
\def    \eps    {\epsilon}
\def    \ka    {\kappa}
\def    \nat    {{\natural}}
\newcommand{\id}{{\mathit id}}
\newcommand{\const}{{\mathit const}}
\def    \C      {{\mathbb C}}
\def    \R      {{\mathbb R}}
\def    \Z      {{\mathbb Z}}
\def    \N      {{\mathbb N}}
\def    \Q      {{\mathbb Q}}
\def    \T      {{\mathbb T}}
\def    \RP     {{\mathbb R}{\mathbb P}}
\def    \12    {{\frac{1}{2}}}
\def    \p      {\partial}
\def    \d  {\mathrm{d}}
\def    \U     {\operatorname{U}}
\def    \HF     {\operatorname{HF}}
\def    \HC     {\operatorname{HC}}
\def    \CC     {\operatorname{CC}}
\def    \H     {\operatorname{H}}
\def    \HM     {\operatorname{HM}}
\def    \MC    {\operatorname{CM}}
\def    \MUCZ  {\operatorname{\mu_{\scriptscriptstyle{CZ}}}}
\begin{document}


\setlength{\smallskipamount}{6pt}
\setlength{\medskipamount}{10pt}
\setlength{\bigskipamount}{16pt}





\title[Closed Reeb orbits on the sphere]{Closed Reeb orbits on the sphere and
  symplectically degenerate maxima}

\author[Ginzburg]{Viktor L. Ginzburg}
\author[Hein]{Doris Hein}
\author[Hryniewicz]{Umberto L. Hryniewicz}
\author[Macarini]{Leonardo Macarini}

\address{VG: Department of Mathematics, UC Santa Cruz, Santa Cruz, CA
  95064, USA} \email{ginzburg@ucsc.edu}

\address{DH: School of Mathematics Institute for Advanced Study,
  Princeton, NJ 08540, USA} \email{dhein@math.ias.edu}

\address{UH and LM: Universidade Federal do Rio de Janeiro, Instituto de
  Matem\'atica, Cidade Universit\'aria, CEP 21941-909, Rio de Janeiro,
  Brazil} \email{umberto@labma.ufrj.br} \email{leonardo@impa.br}

\subjclass[2000]{53D40, 37J10} \keywords{Periodic orbits, Reeb flows,
  Floer and contact homology, Conley conjecture}

\date{\today} 

\thanks{The work is partially supported by the NSF grants DMS-0906204 (VG),
  DMS-1128155 (DH), DMS-0635607 (UH), and by CNPq (LM)}


\begin{abstract} We show that the existence of one simple closed Reeb
  orbit of a particular type (a symplectically degenerate
  maximum) forces the Reeb flow to have infinitely many periodic
  orbits. We use this result to give a different proof
  of a recent theorem of Cristofaro-Gardiner and Hutchings asserting
  that every Reeb flow on the standard contact three-sphere has at least two
  periodic orbits. Our methods are based on adapting the machinery
  originally developed for proving the Hamiltonian Conley conjecture
  to the contact setting.
 
\end{abstract}

\maketitle

\tableofcontents

\section{Introduction and main results}
\label{sec:intro}

\subsection{Introduction}
In this paper, we use the techniques originally developed for
proving the Hamiltonian Conley conjecture to show that the existence
of one simple closed Reeb orbit of a particular type -- a 
symplectically degenerate maximum (SDM) -- forces the Reeb flow to have
infinitely many periodic orbits. We apply this result to give a
different proof of a recent theorem of Cristofaro-Gardiner and
Hutchings, \cite{CGH}, asserting that every contact form supporting 
the standard contact structure on the three-sphere has at least two closed
Reeb orbits.

The proof of our Conley conjecture type result closely follows its
Hamiltonian counterpart from \cite{GG:gaps} (see also
\cite{Gi:CC,He:CC,Hi,SZ} for other relevant results) with local
contact homology, introduced in \cite{HM}, used in place of local
Floer homology.

The idea connecting the Conley conjecture and the existence of at
least two closed Reeb orbits on $S^3$ is, of course, that if there were a
contact form (giving rise to the standard contact structure)
with only one simple closed Reeb orbit, this orbit would be an SDM,
which in turn would ensure the existence of infinitely many periodic
orbits. This is established by a straightforward index analysis with two
non-trivial inputs. One comes from the results in \cite{HWZ1,HWZ2}
eliminating certain index patterns. The other one is a technical
theorem relating the local contact homology of an isolated
($\ka$-iterated) closed Reeb orbit with the ($\Z_\ka$-equivariant)
local Floer homology of its Poincar\'e return map (cf.\
\cite{Gi:CC,HM}). This is Theorem \ref{thm:ch-vs-fh}, which enables us
to apply the results from \cite{GG:gap} concerning local Floer homology in
the contact context, and which we feel may be of use for a variety of
other questions.

\subsection{Main results}
\labell{sec:main-results}
Let us now state the two principal results of the paper.

\begin{Theorem}[\cite{CGH}]
\label{thm:S^3}
Let $\alpha$ be a contact form on $S^3$ such that $\ker\alpha$ is the
standard contact structure. Then the Reeb flow of $\alpha$ has at
least two closed orbits.
\end{Theorem}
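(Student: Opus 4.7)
The plan is to argue by contradiction, as outlined in the introduction. Suppose $\alpha$ is a contact form on $S^3$ with $\ker\alpha$ equal to the standard contact structure whose Reeb flow has exactly one simple closed orbit $\gamma$; then every closed Reeb orbit is an iterate $\gamma^k$. The goal is to show that some iterate of $\gamma$ must be a symplectically degenerate maximum (SDM), at which point the Conley-conjecture-type theorem of this paper (that the existence of an SDM forces infinitely many closed Reeb orbits) immediately contradicts the assumed uniqueness of $\gamma$.

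First I would carry out a direct index analysis. Trivializing $\ker\alpha$ via the standard disk filling, the cylindrical (or linearized) contact homology of the standard tight $S^3$ is supported in positive even degrees with rank one in each. Since $\gamma$ is the unique simple closed orbit, the chain complex computing this invariant is assembled out of the local contact homologies of the iterates $\gamma^k$, so the Conley-Zehnder indices $\MUCZ(\gamma^k)$ must, after the appropriate cancellations, reproduce this profile. A standard iteration-formula argument then pins the mean index $\Delta(\gamma) := \lim_{k\to\infty} \MUCZ(\gamma^k)/k$ down to a narrow range of possibilities.

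Next I would invoke the Hofer--Wysocki--Zehnder results \cite{HWZ1,HWZ2} on pseudoholomorphic curves in the symplectization of $S^3$. These rule out several index patterns a single orbit and its iterates could otherwise realize, and in combination with the constraints above they pin $\Delta(\gamma)$ down to a specific value and force the local contact homology of some iterate $\gamma^{k_0}$ to be concentrated in a single degree of the right parity. This is exactly the numerical signature of an SDM.

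The most delicate step, and the one I expect to be the principal obstacle, is verifying that $\gamma^{k_0}$ actually is an SDM in the sense required by our Conley-conjecture-type theorem, rather than merely having the correct numerical invariants. Here Theorem \ref{thm:ch-vs-fh} is the essential tool: it identifies the local contact homology of $\gamma^{k_0}$ with the $\Z_{k_0}$-equivariant local Floer homology of the Poincar\'e return map, which permits the SDM criteria developed for Hamiltonian fixed points in \cite{GG:gap} to be transplanted into the contact setting. Once the SDM property is confirmed, the main theorem produces infinitely many simple closed Reeb orbits, contradicting the assumption on $\alpha$ and completing the proof.
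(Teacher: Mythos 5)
Your overall strategy --- contradiction, index bookkeeping against the known contact homology of the tight $S^3$, input from \cite{HWZ1,HWZ2}, then Theorem \ref{thm:sdm} --- is the paper's strategy. But there is a genuine gap in the way you close the argument: you propose to show that \emph{some iterate} $\gamma^{k_0}$ is an SDM and then apply Theorem \ref{thm:sdm}, whereas that theorem requires the SDM orbit to be \emph{simple}, and the paper explicitly warns that this hypothesis is not a removable technicality but plays a crucial role in its proof. As stated, your conclusion does not feed into the theorem. The fix is either to run the case analysis so that the simple orbit $\gamma$ itself comes out as the SDM --- which is what actually happens: after excluding the hyperbolic and strongly non-degenerate possibilities one has $\Delta(\gamma)=2m+2\lambda$ with $m\in\{0,1\}$ and $\lambda\in\{0,1/\ell\}$, and the only case not excluded is $m=1$, $\lambda=0$, where $\gamma$ is degenerate with $\Delta(\gamma)=2$ and nonvanishing local contact homology in the top degree of the window permitted by (LF1), which is the SDM criterion (LF3)(ii) --- or else to supplement your version with the admissibility of $k_0$ and the persistence statement that $\gamma^{\ka}$ is an SDM for admissible $\ka$ if and only if $\gamma$ is. Relatedly, ``concentrated in a single degree of the right parity'' is \emph{not} the numerical signature of an SDM: by (LF1) every isolated orbit in dimension three has local contact homology concentrated in at most one degree; the SDM condition is that this degree sits at the top of the window $[\Delta-2,\Delta]$ along an admissible sequence of iterates (or for the degenerate simple orbit itself).

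A smaller inaccuracy: the Hofer--Wysocki--Zehnder input does not ``pin $\Delta(\gamma)$ down to a specific value.'' The index analysis alone eliminates every case except $\Delta=2$ (the SDM case) and $\Delta=2/\ell$, and the sole role of \cite{HWZ1,HWZ2} is to exclude the residual pattern \eqref{eq:degs} arising from $\Delta=2/\ell$: one produces a finite energy plane asymptotic to a simple orbit of index at least $2$, while in that scenario the unique simple orbit has index $1$. (The paper also offers an alternative exclusion of this case by a homological-algebra argument based on the persistence identity \eqref{eq:c-persistence}.) Your appeal to Theorem \ref{thm:ch-vs-fh} as the bridge letting the Hamiltonian SDM machinery of \cite{GG:gap} act on local contact homology is correct and is exactly how the paper proceeds.
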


To put this theorem in context, let us recall some relevant results.
First, note that the existence of at least one closed Reeb orbit in
this case is a theorem of Viterbo; \cite{Vi:WC}.  Furthermore, any
strictly convex hypersurface in $\R^4$ carries either two or
infinitely many closed characteristics; \cite{HWZ:98}. The same holds
for any non-degenerate contact form on $S^3$ supporting the standard
contact structure, provided that all stable and unstable manifolds of
the hyperbolic periodic orbits intersect transversally; \cite{HWZ:03}.
In fact, any non-degenerate contact form on a closed three-manifold has
at least two simple closed Reeb orbits, and, if the manifold is not a
lens space, there are at least three such orbits; \cite{HT}. Moreover,
any symmetric, compact star-shaped hypersurface in $\R^{4}$ has at
least two closed characteristics regardless of whether the resulting contact
form is non-degenerate or not; \cite{Lo:le}. Finally, we refer the
reader to, e.g., \cite{EH,HZ,Lo,Wa} and references therein for results
and conjectures concerning closed Reeb orbits in higher dimensions.

As has been pointed out above, we establish Theorem \ref{thm:S^3} as a
consequence of the following

\begin{Theorem}
\label{thm:sdm}
Let $(M^{2n-1},\ker\alpha)$ be a contact manifold admitting a strong
symplectically aspherical filling $(W,\omega)$. Assume that the Reeb
flow of $\alpha$ has an isolated simple closed Reeb orbit which is a
symplectically degenerate maximum (SDM) with respect to some
symplectic trivialization of $\ker\alpha\!\mid_x$. Furthermore, when
$x$ is not contractible in $W$, we require $(W,\omega)$ to be
symplectically atoroidal. Then the Reeb flow of $\alpha$ has infinitely
many periodic orbits.
\end{Theorem}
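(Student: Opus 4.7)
The plan is to argue by contradiction and adapt the Hamiltonian Conley conjecture argument of \cite{GG:gaps} to the contact setting. Assume the Reeb flow of $\alpha$ has only the SDM orbit $x$ together with finitely many other simple closed orbits $y_1,\dots,y_N$. Local Floer homology is replaced throughout by local contact homology via Theorem \ref{thm:ch-vs-fh}, while the role of global Hamiltonian Floer homology is played by the action-filtered linearized contact (equivalently, positive symplectic) homology of $(M,\alpha)$, defined using the filling $(W,\omega)$. The symplectic asphericity of $(W,\omega)$, and its atoroidality in the non-contractible case, are precisely what is needed for the action and index of $x^\kappa$ to be unambiguously defined in the free homotopy class they represent.

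\textbf{Step 1: local nonvanishing.} I would first show that $\HC^{\mathit{loc}}_*(x^\kappa)$ is nonzero in a distinguished degree for all $\kappa\geq 1$. By Theorem \ref{thm:ch-vs-fh}, this group equals the $\Z_\kappa$-invariant part of the local Floer homology of $\varphi^\kappa$, where $\varphi$ is the Poincar\'e return map of $x$. Since $x$ is an SDM, $\varphi$ is a Hamiltonian SDM germ, so the persistence theorem for SDMs in \cite{GG:gap} produces a nontrivial class in the maximal degree of $\HF^{\mathit{loc}}_*(\varphi^\kappa)$ for every $\kappa$. Such a top-degree class is automatically $\Z_\kappa$-invariant, hence descends to $\HC^{\mathit{loc}}_*(x^\kappa)$ in the corresponding degree $n+\kappa\Delta(x)=n$ (the mean index $\Delta(x)$ vanishes for an SDM).

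\textbf{Step 2: globalization in a narrow action window.} Next, I would promote this local class to a nonzero element of the action-filtered linearized contact homology in a window $(\kappa\CA(x)-\delta,\kappa\CA(x)+\delta)$ for arbitrarily small $\delta>0$. As in \cite{GG:gaps}, one approximates $\alpha$ by nondegenerate contact forms supported near $x^\kappa$, realizes the local class by a nondegenerate orbit in the perturbation, and compares local and global complexes via continuation. The SDM structure is essential to rule out cancellation: pseudo-holomorphic cylinders with $x^\kappa$ as positive asymptote and action drop less than $\delta$ are confined to an arbitrarily small neighborhood of $x^\kappa$ by a monotonicity/energy estimate, so the local class cannot be killed by a global boundary.

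\textbf{Step 3: pigeonhole contradiction.} Under the finiteness assumption, any orbit contributing to the above window must be an iterate $y_i^{m_i}$, so $\kappa\CA(x)$ must lie within $\delta$ of some $m_i\CA(y_i)$ with $i\in\{1,\dots,N\}$. A generic perturbation of $\alpha$ supported away from $x$ (hence preserving the SDM property) arranges that the ratios $\CA(x)/\CA(y_i)$ are all irrational; then for any sufficiently small $\delta$ and a density-one subset of $\kappa\in\N$ no such $m_i$ exists, contradicting Step 2. The principal obstacle I expect is Step 2, the contact analogue of the technical heart of \cite{GG:gaps}: one must engineer a family of admissible Hamiltonians on the completion of $(W,\omega)$ whose Floer complexes in the window $(\kappa\CA(x)-\delta,\kappa\CA(x)+\delta)$ faithfully see the local class produced in Step 1. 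Theorem \ref{thm:ch-vs-fh} is what makes this tractable, because it allows us to import the confinement and persistence arguments developed for Hamiltonian Floer homology essentially verbatim.
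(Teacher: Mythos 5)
Your overall strategy --- localize near $x^\ka$ behind an energy/monotonicity threshold and use Theorem \ref{thm:ch-vs-fh} to import the Hamiltonian SDM machinery --- is the right one, and your Step 1 and the localization half of Step 2 correspond to Proposition \ref{prop:sdm-Reeb} and Lemma \ref{lemma:direct-sum} in the paper. But there is a genuine gap exactly where new orbits are supposed to appear. The class you produce in Step 2 sits in the degree $d_\ka$ of $\HC_*(x^\ka)$ and in an action window \emph{centered} at $\ka c$ (where $c$ is the action of $x$); such a class is carried by $x^\ka$ itself, so its nonvanishing is perfectly consistent with $x$ being the only simple closed orbit. Your Step 3 pigeonhole then has nothing to bite on: you list only the iterates $y_i^{m_i}$ as possible carriers and overlook that $x^\ka$ is an admissible carrier with action exactly $\ka c$. (The genericity device of perturbing $\alpha$ away from $x$ to make the ratios of actions irrational is also dubious, since such a perturbation can create or destroy orbits and need not preserve the finiteness hypothesis being contradicted; in any case it cannot repair the preceding problem.) What the argument actually requires, and what Theorem \ref{thm:ch-sdm} provides, is the nonvanishing of $\HC_{d_\ka+1}^{(\ka c,\,\ka c+\eps)}(M,\alpha)$ for all large $\ka$: one degree \emph{higher} than $d_\ka$ and in an action window lying \emph{strictly above} $\ka c$, hence not attributable to $x^\ka$. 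This directly forces a closed orbit with action in $(\ka c,\,\ka c+\eps)$ for every large $\ka$, and infinitely many orbits follow with no pigeonhole needed.

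The mechanism producing this shifted class is the squeezing argument that your proposal omits. One writes $\alpha=\lambda+H\,dt$ on a tubular neighborhood $U=B\times S^1$ with $H$ having a strict, totally degenerate maximum $c$ at $x$ (Proposition \ref{prop:sdm2}), sandwiches $H$ between a flat bump function $H_+$ and a function $H_-$ with a nondegenerate maximum of small Hessian, and shows that the resulting cobordism map $\HC_{d_\ka+1}^{(\ka c,\,\ka c+\eps)}(U,\alpha_+,\ka[S^1])\to\HC_{d_\ka+1}^{(\ka c,\,\ka c+\eps)}(U,\alpha_-,\ka[S^1])$ is an isomorphism $\Q\to\Q$. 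The class in degree $d_\ka+1$ arises as the image of the SDM class under the connecting homomorphism of the long exact sequence relating the windows $(\ka c-\delta,\,\ka c+\delta)$ and $(\ka c+\delta,\,\ka c+\eps)$ for the model forms --- it is not the SDM class itself surviving. The confinement estimate you invoke suffices for the direct sum decomposition but not for this degree and action shift; the latter rests on the quantitative index and action analysis of $H_\pm$ (conditions (H$_+$), (H$_-$1)--(H$_-$3), and the choice $\ka|c-c_\pm|>\pi R^2$), which is where the SDM hypothesis is genuinely used.
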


Recall that $(W,\omega)$ is called symplectically aspherical when
$[\omega]\!\mid_{\pi_2(W)}=0$ and $c_1(TW)\!\mid_{\pi_2(W)}=0$, and
$(W,\omega)$ is symplectically atoroidal when $[\omega]$ and $c_1(TW)$
vanish on all toroidal classes, i.e., $\left<[\omega],v\right>=0$ and
$\left<c_1(TW),v\right>=0$ for every map $v\colon
\T^2\to W$. We refer the reader to Section
\ref{sec:sdm} for the definition and detailed discussion of
symplectically degenerate maxima. The proof of Theorem \ref{thm:sdm}
draws heavily from recent work on the Conley conjecture in the
Hamiltonian setting; see \cite{Gi:CC,GG:gaps,He:CC,Hi}.  In
particular, similarly to the Hamiltonian case, Theorem \ref{thm:sdm}
follows from a result (Theorem \ref{thm:ch-sdm}) asserting that the
contact homology of $\alpha$ does not vanish for certain action
intervals.  It is very likely that the requirement that $W$ is
symplectically aspherical can be relaxed, with minimal modifications
to the proof, and replaced by the assumption that
$c_1(TW)\!\mid_{\pi_2(W)}=0$ as in \cite{GG:gaps,He:CC} when $x$ is
contractible in $W$ or, in general, by the assumption that $c_1(TW)$
is atoroidal.  Let us also call the reader's attention to the
condition that the SDM orbit is simple. Although this assumption
appears to be of a technical nature (cf.\ \cite{Hi93,Hi97}), it does
play a crucial role in our proof and eliminating it would probably
require developing a different approach to the problem.

As an immediate application of Theorem \ref{thm:sdm}, we observe that
a Riemannian or Finsler manifold has infinitely many closed geodesics
whenever it has one prime closed SDM geodesic. This is the case, for
instance, for the closed geodesic considered in \cite[Proposition
I]{Hi93} whenever this geodesic is prime (and isolated), and hence
the proposition follows, under these hypotheses, from our Theorem
\ref{thm:sdm}. Furthermore, note that Theorem \ref{thm:sdm} completes
a ``mostly Floer--theoretic'' proof of the existence of infinitely
many closed geodesics on $S^2$; see \cite{Ba,Fr} and also
\cite{Hi93,Hi97} and references therein for the original
argument. Indeed, as is observed in \cite{Hi97}, the curve shortening
method from \cite{Gr} yields a Lusternik--Schnirelmann closed geodesic
$x$ with non-trivial local (Morse) homology in degree three, which
without loss of generality can be assumed to be isolated. If $x$ is
non-rotating (i.e., its mean index is equal to two), $x$ is an SDM and
Theorem \ref{thm:sdm} applies. When $x$ is not non-rotating, the
existence of infinitely many closed geodesics is a consequence of the
main theorem in \cite{HMS}. (See also \cite{BH,CKRTZ,Ke} for other
relevant results including a symplectic proof of Franks' theorem.)

\begin{Remark}
\label{rmk:min0}
As is clear from the proof, a result similar to Theorem \ref{thm:sdm}
holds when an SDM orbit is replaced by a symplectically degenerate
minimum (SDMin); see Remark \ref{rmk:min}. For instance, the closed
geodesic $x$ from the main theorem in \cite{Hi97} is an SDMin,
whenever $x$ is isolated. Hence, in particular, combining Theorems
\ref{thm:sdm} and \ref{thm:ch-sdm} with Remark \ref{rmk:min2}, we
reprove the main result of \cite{Hi97} under the additional
assumption that the geodesic in question is isolated and prime.
\end{Remark}

A word is due on the degree of rigor in this paper, which varies
considerably between its different parts. First of all, it should be
noted that the paper heavily relies on the machinery of contact
homology (see, e.g., \cite{Bou,SFT} and references therein), which is
yet to be fully put on a rigorous basis (see \cite{HWZ3,HWZ4}). With
this reservation in mind, the proof of Theorem \ref{thm:sdm} is
essentially complete. Here the usage of contact homology is rather
formal and the argument follows closely the proof of the Conley
conjecture as given in \cite{GG:gaps}, omitting for the sake of
brevity some details which are straightforward to fill in. Theorem
\ref{thm:S^3} is rigorously established as a consequence of Theorem
\ref{thm:sdm} and, at least on the expository level, of a technical
result (Theorem \ref{thm:ch-vs-fh}), mentioned above, relating local
Floer and local contact homology. The proof of Theorem
\ref{thm:ch-vs-fh} is only sketched in this paper. Although some of
the details can be filled in as in \cite{HM}, the argument must
ultimately utilize the machinery of multivalued perturbations. The
difficulty arising here is illustrated in Section \ref{sec:Morse},
where we analyze the Morse theoretic counterpart of the problem by
discussing an isomorphism between equivariant and invariant Morse
homology for manifolds equipped with a finite group action.

However, it is worth pointing out that, in fact, the proof of Theorem
\ref{thm:S^3} depends only on a particular case of Theorem
\ref{thm:ch-vs-fh} and on some results from \cite{HWZ1,HWZ2}. This
particular case is Corollary \ref{cor:rnak}, originally proved in
\cite{HM} and ultimately independent of the theorem. (An alternative
approach is also outlined in Section~\ref{sec:S^3}).

Finally, note that the foundational difficulties discussed above can be
circumvented by using equivariant symplectic homology (see
\cite{BO:gysin,BO:transv,Vi}) in place of contact homology; cf.\
\cite{HM} vs.\ \cite{McL}. Alternatively, one may utilize equivariant
Rabinowitz Floer homology, which appears to carry essentially the same
information as contact or equivariant symplectic homology; see
\cite{AF,CFO}. However, in either case, this approach is likely to require
geometrically much less transparent, perhaps cumbersome, arguments
than those given in the present paper.

\medskip
\noindent\textbf{Notation and conventions.} 
Throughout the paper, we use the conventions and notation of
\cite{GG:gap} when working with notions from Hamiltonian dynamics,
including Floer homology and the Conley--Zehnder index, and of
\cite{HM} in the contact geometry setting. In particular, we utilize
the normalization of the Conley--Zehnder index in which the index of a
non-degenerate maximum with small eigenvalues in dimension $2n$ is
equal to $n$. All homology groups are taken with rational coefficients
unless stated otherwise. The global contact homology is always
understood to be the linearized contact homology. The Floer homology,
local or global, is graded by the Conley--Zehnder index, while the
contact homology in dimension $2n-1$ is, as is customary, graded by
the Conley--Zehnder index of the return map plus $n-3$; the Morse
homology is graded by the Morse index.  We use the notation $\MUCZ(x)$
for the Conley--Zehnder index of a periodic orbit $x$ of a Hamiltonian
diffeomorphism and $|x|$ for the degree of a closed Reeb orbit
$x$. The mean index of $x$ is denoted by $\Delta_H(x)$, where $H$ is a
Hamiltonian, or by $\Delta(x)$. (See, e.g., \cite{Lo,SZ}, for
miscellaneous properties of the Conley--Zehnder and mean indexes used
throughout the paper.)  

\medskip
\noindent\textbf{Acknowledgements.} The authors are grateful to Peter
Albers, Fr\'ed\'eric Bourgeois, Ba\c sak G\"urel, and
Alex Oancea for useful discussions.

\section{Local analysis}
\label{sec:local}

\subsection{Local contact and Floer homology}
\label{sec:comparison}
Consider a closed orbit $x$ of the Reeb flow for a contact form
$\alpha$ on a manifold $M^{2n-1}$.  We do not require $x$ to be
simple, i.e., $x$ can be a multiply-covered, iterated orbit. However,
we do require $x$ to be isolated in the loop space. In this setting,
the local contact homology $\HC_*(x)$ of $\alpha$ at $x$ is
defined. This is the homology of a complex $\CC_*(\alpha',
\ka [S^1])$ generated by some of the closed Reeb orbits which $x$
splits into under a non-degenerate perturbation $\alpha'$ of $\alpha$.

To be more precise, assume that $x=y^\ka$, where $y$ is a simple
closed Reeb orbit of $\alpha$. Fix a small tubular isolating
neighborhood $U=B\times S^1$ of $x$ (or, from a geometrical
perspective, of $y$) and let $\alpha'$ be a $C^\infty$-small
non-degenerate perturbation of $\alpha$. Consider closed Reeb orbits
of $\alpha'$ in the homotopy class $\ka S^1$, contained in $U$. Note
that some of these orbits can be iterated. We call an orbit bad if it
is an even iteration of an orbit with an odd number of Floquet
multipliers in $(-1,\, 0)$.  Otherwise, an orbit is said to be
good. (Note that all orbits in question are non-contractible in $U$.)
The complex $\CC_*(\alpha',\ka[S^1])$ is generated by the good closed
orbits of~$\alpha'$ in the free homotopy class $\ka[S^1]$.  (It is not
hard to show that the solutions of the Cauchy--Riemann equation in the
symplectization of $U$, asymptotic to such orbits, stay away from the
boundary region in $U$; hence, compactness. See \cite{HM} for more
details.) When $\kappa>1$, making this construction rigorous
encounters fundamental difficulties inherent in the definition of the
contact homology: the symplectization of $U$ need not admit an almost
complex structure $J$ meeting the regularity requirements, see
\cite{Bou,SFT,HM}. We proceed assuming that such an almost complex
structure $J$ exists.

On the other hand, consider the Poincar\'e return map $\varphi_x$ of
$x$. This is the germ of a Hamiltonian diffeomorphism with isolated
fixed point, which we still denote by $x$. Then we also have the local
Floer homology of $\varphi_x$ at $x$ defined; see
\cite{Gi:CC,GG:gap,McL}. We denote this homology by $\HF_*(x)$ or
$\HF_*(\varphi_x)$, depending on the context.  When $x$ is simple, the
groups $\HC_*(x)$ and $\HF_*(x)$ are isomorphic up to a shift of
degree; see \cite[Proposition 5.1]{HM}. (This can also be established
by repeating word-for-word the proof of \cite[Proposition 4.30]{EKP}
-- the fact that the Floer homology of a stable Hamiltonian structure
is independent of a framing and of an adjusted almost complex
structure.)  Our next goal is to show that this is still true in
general, once the ordinary local Floer homology group $\HF_*(x)$ is
replaced by its equivariant counterpart.

To describe the setting more accurately, let us first focus on the
case of global Floer homology. Thus let $W$ be a closed symplectic
manifold, which for our purposes can be required to be symplectically
aspherical. Consider a one-periodic in time Hamiltonian $H\colon
W\times S^1 \to \R$, where $S^1=\R/\Z$.  For any positive integer
$\ka$, the Hamiltonian $H$ can also be treated as $\ka$-periodic. In
this case, we will use the notation $H^{\nat \ka}\colon W\times
S^1_\ka\to \R$, where $S^1_\ka=\R/\ka\Z$, and, abusing terminology,
call $H^{\nat \ka}$ the $\ka$th iteration of $H$. The critical points
of the action functional for $H^{\nat\ka}$ are precisely the
contractible $\ka$-periodic orbits of $H$ and $\Z_\ka$ acts on the set
of $\ka$-periodic orbits by time-shift. Let us first assume that $H$
is strongly non-degenerate, i.e., all Hamiltonians $H^{\nat\ka}$ are
non-degenerate.  The definition of the $\Z_\ka$-equivariant Floer
homology $\HF_*^{\Z_\ka}(H^{\nat\ka})$ is identical, essentially
word-for-word with obvious modifications, to the construction of the
$S^1$-equivariant Floer homology for autonomous Hamiltonians; see
\cite[Section 5]{Vi} and \cite{BO:gysin,BO:transv}. (In Section
\ref{sec:Morse} we illustrate the construction by the ``elementary''
case of the equivariant Morse homology.) The resulting homology has
the expected properties similar to the standard Floer homology. In
particular, we also have the filtered equivariant Floer homology,
continuation maps, and, by continuity, the construction extends to
all, not necessarily non-degenerate, Hamiltonians. The total
$\Z_\ka$-equivariant Floer homology is independent of $H$ and does not
appear to carry much new information when $W$ is closed. Over $\Q$, we
have $\HF_*^{\Z_\ka}(H^{\nat\ka})\cong \H_*(W)$ since the group
homology of $\Z_\ka$ vanishes, i.e., $\H_{*>0}(\Z_\ka)=0$. Note
however that the filtered $\Z_\ka$-equivariant Floer homology can
differ from its non-equivariant counterpart.

Coming back to our discussion of the local Floer and contact homology,
observe that the construction of $\Z_\ka$-equivariant Floer homology
localizes in a straightforward way.  Namely, assume that $x=y^\ka$,
i.e., $x$ is the $\ka$th iteration of a simple orbit $y$. Then
$\varphi_x=\varphi_y^\ka$, and the $\Z_\ka$-equivariant local Floer
homology $\HF_*^{\Z_\ka}(\varphi_x)=:\HF_*^{\Z_\ka}(x)$ is
defined. In general, $\HF_*^{\Z_\ka}(x)\neq \HF_*(x)$ even
over $\Q$.

Finally, to explicitly relate the local contact and Floer homology, we
need to fix the gradings of these groups. To this end, we pick a
symplectic trivialization of $\ker\alpha\!\mid_y$. Such a trivialization
determines the grading of the local contact homology and also enables
us to view $\varphi_x$ and $\varphi_y$ as elements of the universal
covering of the group of the germs of Hamiltonian diffeomorphisms,
which in turn determines the grading of the local Floer homology. We
have $|x|=\MUCZ(x)+(n-3)$.

\begin{Theorem}
\label{thm:ch-vs-fh}
Let $x=y^\ka$ be an isolated closed Reeb orbit. Then we have 
$\HC_*(x)=\HF_{*+n-3}^{\Z_\ka}(x)$.
\end{Theorem}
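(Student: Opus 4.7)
The plan is to follow the strategy used in \cite[Proposition 5.1]{HM} for the simple case $\kappa = 1$, upgrading each step to the iterated setting and carefully tracking the $\Z_\kappa$ time-shift symmetry on $\kappa$-periodic orbits. First, I would fix a tubular isolating neighborhood $U = B \times S^1$ of the underlying simple orbit $y$ and a $C^\infty$-small non-degenerate perturbation $\alpha'$ of $\alpha$ supported in $U$. By the standard normal form near a simple closed Reeb orbit, the flow of $\alpha'$ is, after normalizing by its $S^1$-component, generated by a one-periodic Hamiltonian $H \colon B \times S^1 \to \R$ via the Poincar\'e return construction, and the closed Reeb orbits of $\alpha'$ in $U$ representing the class $\kappa[S^1]$ correspond bijectively to $\kappa$-periodic orbits of $H$ near the fixed point $x$, i.e., to critical points of the action functional of $H^{\nat \kappa}$.

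Next, I would identify the chain complexes. The generators of the local Floer complex for $H^{\nat\kappa}$ (in the action window isolating the orbits into which $x$ splits) coincide as a set with the closed Reeb orbits of $\alpha'$ in $U$ in the class $\kappa[S^1]$, and the $\Z_\kappa$ action by time-shift is the same on both sides. Passing to the $\Z_\kappa$-equivariant complex on the Floer side -- equivalently, over $\Q$, to the $\Z_\kappa$-invariant complex -- corresponds precisely to selecting \emph{good} orbits on the contact side: an even iterate of an orbit with an odd number of Floquet multipliers in $(-1, 0)$ is characterized by the relevant generator of $\Z_2 \subset \Z_\kappa$ acting by $-1$ on its coherent orientation line, so bad orbits contribute zero to the invariant complex over $\Q$. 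The degree shift by $n - 3$ is exactly the discrepancy between the Floer grading $\MUCZ$ and the contact grading $\MUCZ + (n - 3)$. The matching of differentials then follows from the standard correspondence between Floer cylinders for $H^{\nat \kappa}$ in $B \times S^1_\kappa$ and finite-energy pseudoholomorphic cylinders in the symplectization of $U$ with asymptotics in the class $\kappa[S^1]$, i.e., the iterated, localized version of the Bourgeois--Oancea type comparison, using the compactness input mentioned after the definition of $\CC_*(\alpha', \kappa[S^1])$ that such cylinders stay away from $\partial U$.

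The main obstacle -- and the reason any proof must remain a sketch -- is transversality for $\kappa > 1$: iterated orbits carry nontrivial isotropy under reparametrization, and generic translation-invariant almost complex structures on the symplectization of $U$ will not make the relevant Cauchy--Riemann operator regular at multiply covered cylinders. This is the well-known foundational gap in contact homology that ultimately requires multivalued (or polyfold) perturbations. I would therefore present the equivariant-to-invariant reduction rigorously via the finite-dimensional Morse-theoretic model developed in Section \ref{sec:Morse}, where the $\Z_\kappa$-equivariant versus $\Z_\kappa$-invariant dichotomy can be verified without analytical complications, and then appeal to the abstract perturbation machinery to upgrade the argument to the Floer--contact setting.
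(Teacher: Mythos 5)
Your proposal follows essentially the same route as the paper's own (sketched) proof: localize to $U=B\times S^1$, pass to the $\kappa$-fold cover $B\times S^1_\kappa$ where the comparison with the Floer complex of the return map reduces to the simple-orbit case, observe that the $\Z_\kappa$ time-shift acts with signs so that bad orbits die in the invariant complex over $\Q$, identify invariant with equivariant homology via the finite-dimensional Morse model of Section \ref{sec:Morse}, and flag the multiply-covered transversality issue as the point requiring multivalued perturbations. This matches the paper's chain of isomorphisms \eqref{eq:c-c}, \eqref{eq:c-f}, \eqref{eq:f-f}, so no further comment is needed.
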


As readily follows from the construction of the equivariant Floer
homology (see \cite[Section 5]{Vi} and \cite[Section 5]{BO:transv} and
also Section \ref{sec:Morse}), there exists a spectral sequence with
$E^2_{*,*}=\HF_*(x)\otimes \H_*(\Z_\ka)=\HF_*(x)$ converging to
$\HF_*^{\Z_\ka}(x)$. As a consequence, we obtain

\begin{Corollary}[\cite{HM}]
\label{cor:rnak}
The total dimension, over $\Q$, of the local contact homology does not exceed the
total dimension of the local Floer homology: $\dim_\Q \HC_*(x)\leq
\dim_\Q \HF_*(x)$. Furthermore, $\HC_*(x)=\HF_{*+n-3}(x)$ when the orbit
$x$ is simple, i.e., $\ka=1$.
\end{Corollary}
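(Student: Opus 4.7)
The plan is to read both assertions off from Theorem~\ref{thm:ch-vs-fh} together with the spectral sequence mentioned just before the corollary, namely $E^2_{*,*} = \HF_*(x) \otimes \H_*(\Z_\ka) \Rightarrow \HF_*^{\Z_\ka}(x)$.

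First I would handle the dimension estimate. Theorem~\ref{thm:ch-vs-fh} already identifies $\HC_*(x)$ with $\HF_{*+n-3}^{\Z_\ka}(x)$, so it is enough to bound the total dimension of the equivariant local Floer homology by that of its non-equivariant counterpart. Working over $\Q$ with the finite group $\Z_\ka$, Maschke's theorem gives $\H_p(\Z_\ka;\Q)=0$ for $p>0$, so the $E^2$-page collapses to a single row which is either $\HF_*(x)$ (if the $\Z_\ka$-action on local Floer homology is trivial) or a subspace of it (its $\Z_\ka$-invariants, in general). Since $E^\infty$ is a subquotient of $E^2$, this yields
\[
\dim_\Q \HC_*(x) \;=\; \dim_\Q \HF_{*+n-3}^{\Z_\ka}(x) \;\leq\; \dim_\Q \HF_*(x),
\]
which is the first assertion.

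For the second assertion, I would simply note that when $\ka=1$ the group $\Z_\ka$ is trivial, so the equivariant and ordinary local Floer homologies coincide tautologically: $\HF_*^{\Z_1}(x)=\HF_*(x)$. Substituting into Theorem~\ref{thm:ch-vs-fh} produces the claimed isomorphism $\HC_*(x)=\HF_{*+n-3}(x)$.

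There is essentially no real obstacle at the level of the corollary itself; all the substance is absorbed into Theorem~\ref{thm:ch-vs-fh}. Once that identification is granted, the statement becomes a formal consequence of the standard principle that for a finite group acting on a $\Q$-vector space the Borel construction reduces equivariant to invariant homology, which in particular forces the total-dimension inequality and gives equality in the trivial case $\ka=1$. The only point that warrants any care is checking that the spectral sequence in question is indeed one of first-quadrant type converging strongly, so that the passage from $E^2$ to $E^\infty$ cannot increase total dimension; this is routine from the construction of $\HF_*^{\Z_\ka}$ alluded to in \cite[Section~5]{Vi} and \cite{BO:transv} and outlined in the Morse-theoretic model of Section~\ref{sec:Morse}.
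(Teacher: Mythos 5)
Your argument is exactly the paper's: the corollary is derived from Theorem~\ref{thm:ch-vs-fh} together with the spectral sequence $E^2_{*,*}=\HF_*(x)\otimes \H_*(\Z_\ka)$ converging to $\HF_*^{\Z_\ka}(x)$, which collapses over $\Q$ since $\H_{*>0}(\Z_\ka;\Q)=0$, giving the dimension bound; the case $\ka=1$ is the tautology $\HF_*^{\Z_1}=\HF_*$. Your remark that the surviving row is in general only the $\Z_\ka$-invariant part of $\HF_*(x)$ is a correct (and slightly more careful) reading of what the paper writes as an equality.
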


\begin{Remark}
\label{rmk:cor-vs-thm}
It is worth keeping in mind that even though we establish here
Corollary \ref{cor:rnak} as a consequence of Theorem
\ref{thm:ch-vs-fh}, the corollary can be proved directly; see
\cite{HM} for a very detailed argument.
\end{Remark}

\begin{proof}[Outline of the proof of Theorem \ref{thm:ch-vs-fh}]
  As above, let $U=B\times S^1$ be a small isolating tubular
  neighborhood of $y$ and let $\alpha'$ be a $C^\infty$-small
  non-degenerate perturbation of $\alpha$. Recall that $\HC_*(x)$
  is, by definition, the homology of the complex $\CC_*(\alpha',\ka
  [S^1])$ generated by the good orbits of $\alpha'$ in $U$ in the
  homotopy class $\kappa[S^1]$ and that we assume that the symplectization of $U$
  admits an almost complex structure $J$ such that the necessary
  regularity requirements are satisfied. 
  
  Consider the $k$-fold covering $\tilde{U}=B\times S^1_\ka$ of $U$. We
  denote by $\tilde{\alpha}'$ and $\tilde{J}$ the lifts of $\alpha'$
  and $J$ to $\tilde{U}$. The group of deck transformations $\Z_\ka$
  acts on the contact complex $\CC_*(\tilde{\alpha}', [S^1_\ka])$ for
  the homotopy class $[S^1_\ka]$. Namely, the
  shift $g\colon t\mapsto (t+g)$ on $S^1_\ka$ acts by sending a
  periodic orbit $z$ of $\tilde{\alpha}'$ to the orbit $\pm g(z)$; see
  \cite[Section 6.3]{HM}.  The regularity of $J$ guarantees that this
  action commutes with the differential and, again essentially by
  definition, the contact complex $\CC_*(\alpha', \ka[S^1])$ agrees
  with the invariant part $\CC_*(\tilde{\alpha}', [S^1_\ka])^{\Z_\ka}$
  of the complex for $\tilde{\alpha}'$. Hence
\begin{equation}
\label{eq:c-c}
\HC_*(x):=\HC_*(\alpha',\ka[S^1])=\HC_*(\tilde{\alpha}',[S^1_\ka])^{\Z_\ka}
=:\HC_*(\tilde{x})^{\Z_\ka},
\end{equation}
where $\tilde{x}$ is the inverse image of $x$ in $\tilde{U}$; cf.\
\cite[Section 6]{HM}. 

\begin{Example}[Good vs.\ bad]
\label{ex:good-vs-bad}
  Assume that $x=y^2$ and $x$ is non-degenerate. Let $g$ be the
  non-trivial element in $\Z_2$. Then $g\colon
  \tilde{x}\mapsto\tilde{x}$ when $x$ is good and $g\colon
  \tilde{x}\mapsto -\tilde{x}$ when $x$ is bad. Thus, in the former
  case, we have $\HC_*(x)=\HC_*(\tilde{x})=\Q$ (supported in degree
  $\MUCZ(x)+n-3$). In the latter case, $\HC_*(x)=0$, but
  $\HC_*(\tilde{x})=\Q$.
\end{Example}

The rest of the proof relies heavily, at least on the conceptual
level, on the constructions of the invariant and equivariant Morse
homology outlined in Section \ref{sec:Morse} and on the proof of
Proposition \ref{prop:Morse}.  Namely, similarly to the definition of
the invariant Morse homology, one can define the $\Z_\ka$-invariant
(local) Floer homology for an iterated Hamiltonian or an iterated
orbit, provided, of course, that there exists a one-periodic almost
complex structure satisfying the regularity requirements.  Hence we
have the $\Z_\ka$-invariant local Floer homology
$\HF_*(\varphi_{\tilde{x}})^{\Z_\ka}$ of the Poincar\'e return map of
$\tilde{x}$. Clearly, $\varphi_{\tilde{x}}=\varphi_x=\varphi_y^\ka$
and $\HF_*(\varphi_{\tilde{x}})^{\Z_\ka}=\HF_*(\varphi_x)^{\Z_\ka}$.
A homotopy between a contact framing in $\tilde{U}$ and a Hamiltonian
framing (see \cite[Section 5]{HM} and \cite[Proposition 4.30]{EKP})
induces an isomorphism
$\HC_*(\tilde{x})=\HF_*(\varphi_{\tilde{x}})$. When the homotopy is
regular and $\Z_\ka$-invariant, the continuation map commutes with the
$\Z_\ka$-action, and we obtain an isomorphism
 \begin{equation}
 \label{eq:c-f}
\HC_*(\tilde{x})^{\Z_\ka}=\HF_{*+n-3}(\varphi_{\tilde{x}})^{\Z_\ka}=:
\HF_{*+n-3}(\varphi_x)^{\Z_\ka}.
 \end{equation}

Furthermore, Proposition \ref{prop:Morse} establishes an isomorphism between
invariant and equivariant Morse homology. With suitable modifications,
the proposition and its proof carry over to the realm of
Floer homology, again provided that the required regularity conditions
are met. As a consequence, we have 
\begin{equation}
\label{eq:f-f}
\HF_*(\varphi_x)^{\Z_\ka}=
\HF_*^{\Z_\ka}(\varphi_x).
\end{equation}

Combining the isomorphisms \eqref{eq:c-c}, \eqref{eq:c-f}, and
\eqref{eq:f-f}, we conclude that
$\HC_*(x)=\HF_{*+n-3}^{\Z_\ka}(x)$.
\end{proof}

\begin{Remark}
\label{rmk:multi-valued}
Eliminating the assumption that the symplectization of $U$ admits a
regular almost complex structure $J$, crucial for the proof of Theorem
\ref{thm:ch-vs-fh}, requires the machinery of multi-valued
perturbations in the context of contact homology.  Developing such a
machinery is currently work in progress by Hofer, Wysocki and Zehnder;
see \cite{HWZ3,HWZ4} and references therein, and also Remark
\ref{rmk:multi-valued2}.
\end{Remark}

\subsection{Symplectically degenerate maxima}
\label{sec:sdm}

In this section, we recall several relevant results concerning symplectically
degenerate maxima, following mainly \cite[Section 5.1]{GG:gap}. Let
$\varphi=\varphi_H$ be an element of the universal cover of the group
of germs of Hamiltonian diffeomorphisms of $\R^{2d}$ at a point
$x$. Throughout this section, it will be convenient to assume that
$x$ is an isolated fixed point for all iterations
$\varphi^\ka$. (Treating $x$ as a one-periodic orbit, we will
sometimes write $x^\ka$ to indicate the order of iteration.) Recall that
$\Delta_H(x)$ is the mean index of $x$. 

\begin{Definition}
\labell{def:sdm}
The point $x$ is said to be a \emph{symplectically degenerate
maximum} (or an SDM) if $\Delta(x)=0$ and $\HF_d(\varphi_H)\neq 0$.
\end{Definition}

Note that requiring $\varphi$ to be an element of the universal cover
rather than just a Hamiltonian diffeomorphism makes
the mean index of $x$ and the grading of the local local Floer
homology well defined. (Otherwise, $\Delta(x)$ and the grading would
be defined only modulo $2\Z$.)  A germ of a Hamiltonian diffeomorphism $\varphi$
is called an SDM if it admits an SDM lift to the universal cover.
This is equivalent to that $\Delta_H(x)\in 2\Z$ and
$\HF_{d+\Delta_H(x)}(\varphi_H)\neq 0$ for any (or,
equivalently, some) Hamiltonian $H$ generating~$\varphi$.

\begin{Example}
\labell{exam:sdm} 
Let $H$ be an autonomous Hamiltonian attaining a strict local maximum
at $x$.  Assume in addition that $\d^2 H_x=0$ or, more generally, that
all eigenvalues of $\d^2 H_x$ are zero.  Then, as is easy to see, $x$
is an SDM, cf.\ Proposition \ref{prop:sdm2}. (Here, as is customary in
Hamiltonian dynamics, the eigenvalues of a quadratic form on a
symplectic vector space are, by definition, the eigenvalues of the
linear symplectic vector field it generates.  Thus, for instance, the
quadratic form $Q(p,q)=q^2$ on the standard $(\R^2,dp\wedge dq)$ has
zero eigenvalues.)
\end{Example}

Recall that $\varphi$ (or $x$) is said to be totally degenerate if all
eigenvalues of $D\varphi$ are equal to $1$. An iteration $\ka$ is
called admissible if the generalized eigenvalue $1$ has the same
multiplicity for $D\varphi$ and $D\varphi^\ka$. For instance, all
$\ka$ are admissible when $\varphi$ is totally degenerate or
when none of the eigenvalues of $D\varphi$ is a root of
unity. Although this fact is not explicitly used in what follows, it is
useful to keep in mind that an admissible iteration of an isolated
orbit is automatically isolated; see \cite[Proposition 1.1]{GG:gap}.

\begin{Proposition}[\cite{GG:gap}]
\label{prop:sdm1}

The following conditions are equivalent:

\begin{itemize}

\item[(a)] the point $x$ is an SDM;

\item[(b)] $\HF_d(\varphi_H^{\ka_i})\neq 0$ for some sequence of
admissible iterations $\ka_i\to\infty$;

\item[(c)] the point $x$ is totally degenerate,
$\HF_d(\varphi_H)\neq 0$ and $\HF_d(\varphi_H^{\ka})\neq 0$ for at
least one admissible $\ka\geq d+1$;

\item[(d)] $\HF_d(\varphi_H^{\ka})=\Q$ and $\HF_{*\neq
  d}(\varphi_H^\ka)=0$ for all $\ka$.

\end{itemize}

\end{Proposition}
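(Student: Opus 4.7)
The plan is to identify condition $(d)$ as the strongest and most rigid statement, prove the main equivalence $(a) \Leftrightarrow (d)$ directly, and then derive the remaining implications via two structural tools that will be used repeatedly: the \emph{support theorem}, which says $\HF_k(\varphi_H^\kappa)$ vanishes unless $|k - \MUCZ(x^\kappa)|$ is bounded by half the nullity of $x^\kappa$, together with the bound $|\MUCZ(x^\kappa) - \kappa\Delta(x)| \leq d$ for admissible $\kappa$; and the \emph{persistence theorem} for a totally degenerate orbit with vanishing mean index, which yields $\HF_*(\varphi_H^\kappa) \cong \HF_*(\varphi_H)$ for every $\kappa$.

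For the core implication $(a) \Rightarrow (d)$, I start from $\Delta(x)=0$ and $\HF_d(\varphi_H) \neq 0$. The first step is to show that $x$ is totally degenerate: if some eigenvalue of $D\varphi$ were different from $1$, a symplectic splitting of $\varphi$ into the generalized $1$-eigenspace factor and its complement, combined with a K\"unneth-type decomposition of the local Floer complex, would confine the support of $\HF_*(\varphi_H)$ to a sub-range not containing $d$, contradicting the hypothesis. With $x$ totally degenerate and $\Delta(x)=0$, every iteration is admissible and the persistence theorem gives $\HF_d(\varphi_H^\kappa) \neq 0$ for every $\kappa$. To sharpen this to $\HF_d = \Q$ and $\HF_{k \neq d} = 0$, I would identify the local Floer complex of $x$ with the local Morse complex of a generating function of $\varphi$ near $x$; the condition $\HF_d \neq 0$ then means the generating function has a genuine (possibly degenerate) local maximum at $x$, from which the rank-one conclusion in top degree and vanishing in all other degrees follow by standard local Morse theory.

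The implications out of $(d)$ are then relatively painless. For $(d) \Rightarrow (a)$, the support theorem forces $\Delta(x) = 0$ since otherwise $\kappa\Delta(x)$ drifts and $\HF_d(\varphi_H^\kappa) = 0$ for $\kappa$ large, contradicting $(d)$; and $\HF_d(\varphi_H) \neq 0$ by setting $\kappa = 1$. For $(d) \Rightarrow (b)$, total degeneracy follows from the K\"unneth obstruction in reverse applied to $(d)$, so every $\kappa_i = i$ is admissible. For $(d) \Rightarrow (c)$, total degeneracy is derived the same way, while the other two conditions are immediate by choosing $\kappa = 1$ and any $\kappa \geq d+1$.

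It remains to close the chain via $(b) \Rightarrow (a)$ and $(c) \Rightarrow (a)$. For $(b) \Rightarrow (a)$, the support theorem applied to each admissible $\kappa_i$ yields $|\kappa_i \Delta(x)| \leq 2d$, and since $\kappa_i \to \infty$ we obtain $\Delta(x) = 0$; the persistence theorem in reverse, after deducing total degeneracy from the non-vanishing of $\HF_d$ for large admissible iterations with zero mean index, then transports $\HF_d(\varphi_H^{\kappa_i}) \neq 0$ back to $\HF_d(\varphi_H) \neq 0$. For $(c) \Rightarrow (a)$, total degeneracy gives $\Delta(x) \in \Z$ and makes every iteration admissible; combining the support bounds from both $\HF_d(\varphi_H) \neq 0$ and $\HF_d(\varphi_H^\kappa) \neq 0$ for some $\kappa \geq d+1$ produces $|\Delta(x)| \leq 2d/(d+1) < 2$, narrowing $\Delta(x)$ to $\{0, \pm 1\}$, and the non-zero values are then excluded by a finer analysis showing that $\HF_d$ can sit at the boundary of the support window only when $\Delta(x) = 0$ and the orbit has maximum-type local structure. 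The main obstacle throughout is the persistence theorem invoked in $(a) \Rightarrow (d)$, which is the Floer-theoretic translation of Hingston's persistence result for degenerate Morse maxima and requires a careful chain-level construction of the continuation and iteration maps between the perturbed complexes of $\varphi$ and $\varphi^\kappa$.
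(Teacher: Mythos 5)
First, note that the paper does not actually prove this proposition: the equivalence of (a), (b) and (c) is quoted from \cite{GG:gap}, the implication (d)$\Rightarrow$(b) is declared obvious, and (b)$\Rightarrow$(d) is referred to the proof of \cite[Theorem 1.1]{GG:gap} combined with \cite[Section 3.1]{Gi:CC}. Your outline reassembles essentially the machinery those references use -- the support bound $\supp\HF_*(\varphi_H^\ka)\subset[\ka\Delta(x)-d,\,\ka\Delta(x)+d]$, persistence under admissible iteration, splitting off the non-degenerate part of the linearization, and comparison with local Morse homology -- and the implications (d)$\Rightarrow$(a), (d)$\Rightarrow$(b), (d)$\Rightarrow$(c) and (b)$\Rightarrow$(a) are fine as sketched. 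So the architecture is right, but two steps carry the real weight and are not adequately justified.

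The main one is in (a)$\Rightarrow$(d): ``identify the local Floer complex with the local Morse complex of a generating function of $\varphi$ near $x$.'' For a general germ this identification is simply false: for an elliptic non-degenerate fixed point, $\HF_*(\varphi)$ is concentrated in degree $\MUCZ(x)$, which can be any odd integer for a suitable lift, whereas the local Morse homology of a generating Hamiltonian is concentrated in its Morse index, and Morse index minus $d$ need not equal $\MUCZ(x)$. The identification holds only after one produces a Hamiltonian generating $\varphi$ (in the universal cover) whose Hessian at $x$ is sufficiently small; constructing such a Hamiltonian for a totally degenerate germ with $\Delta(x)=0$, and then showing that $\HF_d\neq 0$ forces it to have a strict local maximum at $x$, is precisely the content of \cite[Section 3.1]{Gi:CC} (essentially Proposition \ref{prop:sdm2} of this paper) and is the hard part of the whole proposition -- calling it ``standard local Morse theory'' hides exactly the difficulty the citation is meant to cover. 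The second, smaller gap is in (c)$\Rightarrow$(a): after narrowing $\Delta(x)$ to $[0,\,2d/(d+1)]\subset[0,2)$ you defer the exclusion of nonzero values to an unspecified ``finer analysis,'' which as phrased is close to assuming what must be proved. The standard way to close this is to use that total degeneracy of $D\varphi$ forces $\Delta(x)$ to be an even integer (a standard property of the mean index), after which $\Delta(x)\in[0,2)$ immediately gives $\Delta(x)=0$; without some input of this kind the support bounds alone do not suffice.
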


Equivalence of (a) and (b) and (c) is proved in \cite{GG:gap}. Clearly
(d) implies (b). The fact that (d) also follows from (b) can be easily
extracted from the proof of \cite[Theorem 1.1]{GG:gap} combined with
\cite[Section 3.1]{Gi:CC}).

We observe that, as a consequence of the proposition and of the
persistence of local Floer homology (see \cite[Theorem 1.1]{GG:gap}),
the point $x^\ka$ is an SDM for any admissible iteration $\ka$ if and
only if $x$ is an SDM.  The next proposition shows that the behavior
of $\varphi_H$ near an SDM is similar to that described in Example
\ref{exam:sdm}. The essence of this result is that $x$ is an SDM if
and only if $\varphi$ can be generated by a Hamiltonian $K$ with local
maximum at $x$ and arbitrarily small Hessian.

\begin{Proposition}[\cite{Gi:CC,Hi}]
\label{prop:sdm2}

The point $x$ is an SDM if and only if for every $\eta>0$ there exists
a Hamiltonian $K$ near $x$ such that $\varphi_K=\varphi_H$ in the
universal covering of the group of germs of Hamiltonian
diffeomorphisms at $x$ and
\begin{itemize}
\item[(i)] $x$ is a strict local maximum of $K_t$ for all $t\in S^1$,
\item[(ii)] $\d^2 (K_t)_x$ has zero eigenvalues and $\|\d^2
  (K_t)_x\|_{\Xi}<\eta$ for all $t\in S^1$ and some symplectic basis
  $\Xi$ in $T_p M$.
\end{itemize}

\end{Proposition}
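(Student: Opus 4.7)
The proposition is a biconditional, and I expect the nontrivial content to lie in the implication ``$x$ is an SDM $\Rightarrow$ existence of $K$.''

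For the easy direction, assume such $K$ exists for every $\eta>0$ and pick one with $\eta$ small. Then (ii) forces the linearized Hamiltonian vector field $X_{K_t}$ at $x$ to be small and nilpotent, so the linearized time-one map is unipotent and $\Delta(x)=0$; in particular $x$ is totally degenerate. An autonomous Hamiltonian with a strict local maximum at $x$ and vanishing-eigenvalue Hessian has local Floer homology $\Q$ in degree $d$ (Example \ref{exam:sdm}), and this conclusion will persist under $C^2$-small time-dependent perturbations by \cite[Theorem 1.1]{GG:gap}. Since $\varphi_K=\varphi_H$ in the universal cover, $\HF_d(\varphi_H)\neq 0$ and $x$ is an SDM.

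For the hard direction, I would proceed as follows. By Proposition \ref{prop:sdm1}, $D\varphi_H(x)=I$, so working in a Darboux chart centered at $x$ the germ of $\varphi_H$ will admit a generating function $S:U\to\R$ with $\d S(0)=0$ and $\d^2 S(0)=0$; the choice of lift to the universal cover fixes $S$ up to an additive constant. Next I would identify the local Floer homology of $\varphi_H$ at $x$ with the local relative singular homology of $S$ at $0$ (shifted so that the top degree corresponds to $d$), using persistence of local Floer homology together with the Morse-theoretic description of Floer homology in the autonomous $C^2$-small regime. The SDM hypothesis $\HF_d(\varphi_H)\neq 0$ will then force the top-degree local homology of $S$ at $0$ to be nonzero, and hence $0$ to be a topological local maximum of $S$. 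Finally I would replace $S$ by a smooth approximation $\tilde{S}$ that is a \emph{strict} local maximum at $0$ with $\d^2\tilde{S}(0)=0$ and $\|\d^2\tilde{S}\|<\eta$ nearby, and convert $\tilde{S}$ back into a $1$-periodic Hamiltonian $K_t$ inheriting the strict maximum and the small Hessian for every $t\in S^1$.

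The hard part will be the last two steps: upgrading the topological local maximum produced by the Floer-homology argument to a \emph{strict} local maximum with arbitrarily small Hessian, while keeping the generated germ fixed in the universal cover. In particular, I must verify that the deformation from $S$ to $\tilde{S}$ can be performed through generating functions with the same local Floer homology and the same universal-cover element; this will rest on a one-parameter interpolation argument together with the continuity of the universal-cover element under such deformations, invoking once more the persistence statement from \cite[Theorem 1.1]{GG:gap}. The remaining steps amount to standard generating-function-to-Hamiltonian bookkeeping.
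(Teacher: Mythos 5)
The paper does not prove this proposition: it is quoted from \cite{Gi:CC,Hi} (the relevant argument is \cite[Sections 5--6]{Gi:CC}, which in turn adapts Hingston's generating-function argument). Your overall strategy -- total degeneracy gives a generating function $S$ with $\d S(0)=0$, $\d^2S(0)=0$; local Floer homology equals local Morse homology of $S$ up to a shift; nonvanishing in top degree forces a local maximum; convert back to a Hamiltonian -- is indeed the strategy of those references. But two steps of your plan are genuinely problematic.

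First, the step ``replace $S$ by a smooth approximation $\tilde S$'' is not permissible. The generating function of the germ $\varphi_H$ is determined by $\varphi_H$ up to an additive constant (not, as you write, fixed by the choice of lift), so any $\tilde S\neq S$ generates a \emph{different} germ, whereas the proposition demands $\varphi_K=\varphi_H$ exactly. Your own consistency requirement (``the same universal-cover element'') would force $\tilde S=S$. Fortunately the replacement is also unnecessary: for an isolated critical point, being a local maximum automatically implies being a \emph{strict} local maximum (a non-strict maximum produces nearby critical points), and $\d^2S(0)=0$ already holds; also note that condition (ii) constrains the Hessian of $K_t$ only \emph{at} $x$, not nearby, so no control of $\d^2S$ away from $0$ is needed. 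Second, what you dismiss as ``standard generating-function-to-Hamiltonian bookkeeping'' is the technical heart of the proof: one must produce a $1$-periodic family $K_t$ whose time-one flow realizes the \emph{given} element $\varphi_H$ of the universal cover and such that $x$ is a strict local maximum of $K_t$ for \emph{every} $t$, with $\d^2(K_t)_x$ small in a single symplectic basis. The naive path $s\mapsto\varphi_{sS}$ need not be generated by Hamiltonians with a maximum at $x$ without substantial normalization, and matching the prescribed lift requires an argument (both lifts have mean index zero and $\pi_1$ of the germ group is detected by the Maslov index, so they coincide -- but this must be said). This construction occupies most of \cite[Section 6]{Gi:CC}. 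Finally, in the easy direction your appeal to \cite[Theorem 1.1]{GG:gap} is misplaced -- that theorem concerns persistence under \emph{iteration}, not under time-dependent perturbation; the standard argument instead squeezes $K_t$ between autonomous bump functions $K_-\leq K_t\leq K_+$ and uses monotonicity, as in Section \ref{sec:prf-sdm} of this paper.
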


Here $\|\d^2 (K_t)_x\|_{\Xi}$ stands for the norm of $\d^2
(K_t)_x$ with respect to the Euclidean inner product on $T_x \R^{2n}$
for which $\Xi$ is an orthonormal basis; see \cite[Section
2.1.3]{Gi:CC}. 

\begin{Remark}
\label{rmk:min}
Replacing $d$ by $-d$ in the above discussion, we arrive at the notion
of a symplectically degenerate minimum (SDMin); see \cite{He:cotangent}. It has
properties similar to those of an SDM but with a maximum replaced
by a minimum. Note that $x$ is an SDM for $\varphi$ if and only if $x$
is an SDMin for $\varphi^{-1}$.
\end{Remark}

An isolated closed Reeb orbit $x$ of $\alpha$ on $M^{2n-1}$ is said to
be an SDM if it is an SDM for the Poincar\'e return map
$\varphi$. Note that this notion depends on the choice of a
trivialization of $\ker\alpha\!\mid_x$. For this trivialization gives
rise to a lift of $\varphi$ to the universal cover of the group of
germs of Hamiltonian diffeomorphism.  Applying Proposition
\ref{prop:sdm1}, and keeping in mind the difference in gradings and
that $d=n-1$, we obtain

\begin{Proposition}
\label{prop:sdm-Reeb}

The following conditions are equivalent:

\begin{itemize}

\item[(a)] the orbit $x$ is an SDM with respect to a trivialization
  such that $\Delta(x)=0$;

\item[(b)] $\HC_{2n-4}(x^{\ka_i})\neq 0$ for some sequence of
admissible iterations $\ka_i\to\infty$;

\item[(c)] the orbit $x$ is totally degenerate,
$\HC_{2n-4}(x)\neq 0$ and $\HC_{2n-4}(x^{\ka})\neq 0$ for at
least one admissible iteration $\ka\geq n$;

\item[(d)] $\HC_{2n-4}(x^{\ka})=\Q$ and $\HC_{*\neq
  2n-4}(x^\ka)=0$ for all $\ka$.

\end{itemize}

\end{Proposition}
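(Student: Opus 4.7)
The plan is to reduce Proposition \ref{prop:sdm-Reeb} to its Hamiltonian counterpart Proposition \ref{prop:sdm1} by translating each condition on the local contact homology of $x^\kappa$ to the analogous condition on the local Floer homology of the iterated Poincar\'e return map $\varphi^\kappa$, via Theorem \ref{thm:ch-vs-fh}. The return map acts on a symplectic vector space of dimension $2d$ with $d = n-1$, so the Hamiltonian SDM condition from Proposition \ref{prop:sdm1} reads $\HF_{n-1}(\varphi^\kappa) \neq 0$. The chosen trivialization with $\Delta(x) = 0$ lifts $\varphi$ to an element of the universal cover of the group of Hamiltonian germs with zero mean index, and since $\Delta(x^\kappa) = \kappa\,\Delta(x) = 0$ for every $\kappa$, gradings and indices stay well defined under iteration. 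Under the grading shift provided by Theorem \ref{thm:ch-vs-fh}, contact degree $2n-4$ corresponds to Floer degree $n-1$, so condition (a) here is literally the Hamiltonian SDM condition for $\varphi$, via Corollary \ref{cor:rnak} applied to the simple orbit $x$.

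For the implication (a) $\Rightarrow$ (d), I would first invoke Proposition \ref{prop:sdm1} to conclude that, assuming $x$ is an SDM, one has $\HF_{n-1}(\varphi^\kappa) = \Q$ and $\HF_{*\neq n-1}(\varphi^\kappa) = 0$ for every $\kappa$, and moreover that $\varphi$ is totally degenerate, so $D\varphi$ has only the eigenvalue $1$ and, in particular, no Floquet multiplier in $(-1,0)$. It then follows that no iterate $x^\kappa$ is bad and that the generator of $\Z_\kappa$ acts by $+1$ on the one-dimensional group $\HF_{n-1}(\varphi^\kappa)$. Since $|\Z_\kappa|$ is invertible in $\Q$, the equivariant homology equals the invariants, so
\[
\HF^{\Z_\kappa}_*(\varphi^\kappa) \;=\; \HF_*(\varphi^\kappa)^{\Z_\kappa} \;=\; \HF_*(\varphi^\kappa),
\]
and Theorem \ref{thm:ch-vs-fh} then yields $\HC_{2n-4}(x^\kappa) = \Q$ and $\HC_{*\neq 2n-4}(x^\kappa) = 0$ for all $\kappa$, which is (d).

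The remaining directions close the circle: (d) $\Rightarrow$ (b) is trivial, and (a) $\Rightarrow$ (c) follows from (a) $\Rightarrow$ (d) together with the total degeneracy of an SDM. For (b) $\Rightarrow$ (a) and (c) $\Rightarrow$ (a), I would use that, over $\Q$, $\HF^{\Z_\kappa}_*(\varphi^\kappa)$ is the invariant subspace of $\HF_*(\varphi^\kappa)$, so nonvanishing of $\HC_{2n-4}(x^\kappa)$ forces nonvanishing of $\HF_{n-1}(\varphi^\kappa)$ in the corresponding Floer degree. Thus conditions (b), respectively (c), of Proposition \ref{prop:sdm-Reeb} transcribe to conditions (b), respectively (c), of Proposition \ref{prop:sdm1} for $\varphi$, and the Hamiltonian proposition delivers (a). The one point that requires care, and the main potential obstacle in the proof, is the triviality of the $\Z_\kappa$-action on the one-dimensional group $\HF_{n-1}(\varphi^\kappa)$ throughout the SDM regime, which rests on the absence of negative Floquet multipliers for a totally degenerate orbit; once this is granted, the proposition becomes a direct transcription of its Hamiltonian counterpart.
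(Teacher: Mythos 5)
Your proposal is correct and follows the paper's own route: the paper derives Proposition \ref{prop:sdm-Reeb} directly from Proposition \ref{prop:sdm1} via Theorem \ref{thm:ch-vs-fh}, ``keeping in mind the difference in gradings and that $d=n-1$.'' Your additional care about the triviality of the $\Z_\ka$-action on $\HF_{n-1}(\varphi^\ka)$ for a totally degenerate orbit (so that invariants, equivariant homology, and the full local Floer homology all coincide over $\Q$) is exactly the point left implicit in the paper and is handled correctly.
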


\subsection{Local Floer and contact homology in low dimensions}
\label{sec:low-dim}
In this section, we state some simple properties of local Floer
homology in dimension two and local contact homology in dimension
three to be used in what follows. These properties are easy
consequences of the results from \cite{GG:gap} and Theorem
\ref{thm:ch-vs-fh}.

Let $x$ be a closed Reeb orbit in
dimension three (with a fixed trivialization of $\ker\alpha$ along
$x$) and let $\varphi$ be an element of the universal cover of the
group of germs of Hamiltonian diffeomorphisms of $\R^{2}$ at a point
or more generally a periodic orbit,
which we also denote by $x$. (For instance, we can have
$\varphi=\varphi_x$ in the setting of Section \ref{sec:comparison}.)
We always assume that $x^\ka$ is isolated for all
$\ka$. As in Section \ref{sec:comparison}, we will use the notation
$\HF_*(x)$ or $\HF_*(\varphi)$ and $\HC_*(x)$ for the local Floer and
local contact homology of $x$.

We start by recalling the following particular case of \cite[Theorem
1.1]{GG:gap} mentioned in Section \ref{sec:sdm} and repeatedly used in
this paper:

\begin{Proposition}[Persistence of Local Floer Homology, \cite{GG:gap}]
\label{prop:persistence}
Let, as above, $x$ be a periodic orbit (simple or iterated)
of $\varphi$ in dimension two. Assume that $x$ is degenerate. Then
$$
\HF_*(\varphi^\ka)=\HF_{*+(\ka-1)\Delta(x)}(\varphi).
$$
\end{Proposition}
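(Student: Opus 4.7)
The plan is to derive this proposition as a direct specialization of the general persistence theorem, Theorem 1.1 of \cite{GG:gap}, to the two-dimensional setting; the statement of Proposition \ref{prop:persistence} should essentially be read off from that theorem once one verifies that its hypotheses are automatic here.

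First, I would observe that in dimension two a degenerate isolated fixed point is automatically \emph{totally} degenerate in the sense of Section \ref{sec:sdm}. Indeed, since $D\varphi\in\Sp(2,\R)$ has determinant one, its two eigenvalues are reciprocal; so as soon as $1$ is an eigenvalue, both eigenvalues must equal $1$. Consequently, the generalized eigenvalue $1$ has full multiplicity for every iterate $D\varphi^\ka$, which means that every $\ka\ge 1$ is admissible. Total degeneracy, together with the iteration formula for the Conley--Zehnder index, also forces the mean index to behave linearly under iteration:
$$
\Delta(x^\ka)=\ka\,\Delta(x).
$$

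Second, I would invoke the persistence result from \cite{GG:gap}, which states that for an admissible iteration $\ka$ of an isolated degenerate orbit, $\HF_*(\varphi^\ka)$ is canonically isomorphic to $\HF_*(\varphi)$ up to a grading shift equal to the mean-index difference $\Delta(x^\ka)-\Delta(x)$. Combining this with the linearity established in the previous step yields the shift $(\ka-1)\Delta(x)$ claimed in the proposition, with the sign convention inherited from \cite{GG:gap}.

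The argument presents no real obstacle: the content is entirely contained in the quoted persistence theorem, and the only work specific to the present setting is the dimension-two dichotomy ``degenerate $\Rightarrow$ totally degenerate,'' together with the linearity of the mean index that total degeneracy forces. Both are immediate from elementary symplectic linear algebra in dimension two, so the proof can be written in a few lines once the reader is pointed to \cite[Theorem 1.1]{GG:gap}. If anything warrants care, it is just bookkeeping around the grading convention so that the shift comes out with the sign $+(\ka-1)\Delta(x)$ as stated, rather than its negative.
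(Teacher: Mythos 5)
Your proof is correct and follows the same route as the paper, which simply states the proposition as a particular case of \cite[Theorem 1.1]{GG:gap}; your added observations (degenerate $\Rightarrow$ totally degenerate in dimension two, hence every $\ka$ is admissible, and the shift equals $\Delta(x^\ka)-\Delta(x)=(\ka-1)\Delta(x)$) are exactly the bookkeeping needed to specialize that theorem. The only minor over-qualification is that the linearity $\Delta(x^\ka)=\ka\Delta(x)$ holds for any orbit, not just totally degenerate ones.
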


Furthermore, local Floer and local contact homology in low dimensions have
the following features:

\begin{itemize}

\item[(LF1)] The local homology groups $\HF_*(\varphi)$ are
  concentrated in at most one degree $k\in [\Delta(x)-1,\Delta(x)+1]$,
  i.e., $\HF_k(\varphi)\neq 0$ for at most one $k$ and
  $|k-\Delta(x)|\leq 1$. When $x$ is non-degenerate $k=\MUCZ(x)$. In
  general, we will call $k$ the generalized Conley--Zehnder index of
  $x$ and keep the notation $k=\MUCZ(x)$. Likewise, $\HC_*(x)$ is
  concentrated in at most one degree $k\in
  [\Delta(x)-2,\Delta(x)]$. When $x$ is non-degenerate and the local
  homology does not vanish, $k=|x|$. In general, we keep the notation
  $|x|$ for this degree, when $\HC_*(x^\ka)\neq 0$.

\end{itemize}
When $x$ is non-degenerate this is obvious. In the degenerate case,
the result in the case of Floer homology can be easily extracted from
the proof of \cite[Theorem 1.1]{GG:gap} and the fact that the local
Morse homology of a function in two variables is concentrated in at
most one degree. The assertion for contact homology now readily
follows from Theorem \ref{thm:ch-vs-fh} or even from Corollary
\ref{cor:rnak}, cf.\ Remark \ref{rmk:cor-vs-thm}.

\begin{itemize}

\item[(LF2)] Assume that $x$ is degenerate and $\HF_*(x)\neq 0$ (or
  $\HC_*(x)\neq 0$ in the contact case). Then the mean index
  $\Delta(x)$ is an even integer and, in the notation of (LF1),
  $\MUCZ(x^\ka)=1+\ka\Delta(x)+\eps$ (or 
$|x^\ka|=\ka\Delta(x)+\eps$), where $\eps$ is either $\pm 1$ or
  0, and is independent of $\ka$. 

\end{itemize}
Here only the assertion that $\eps$ is independent of $\ka$ requires
a proof. In the case of Floer homology, this fact can be
readily extracted from the proof of the persistence of local Floer
homology; see \cite[Theorem 1.1]{GG:gap}. The assertion in the contact
case, follows now from Theorem \ref{thm:ch-vs-fh} or Corollary
\ref{cor:rnak}. Note that at this point we can only claim
that $\HC_*(x^\ka)$ must vanish for $*\neq |x^\ka|$, but not that
$\HC_{|x^\ka|}(x^\ka)\neq 0$; see however \eqref{eq:c-persistence}.

Let us now take a closer look at the situation when the local Floer or
contact homology of $x$ or its iterations vanish. Assume first that
$x$ is degenerate. Then, as is easy to see, we can have $\HF_*(x)=0$
in all degrees. (In all examples known to us, such homologically
trivial orbits are simple.) By Corollary \ref{cor:rnak}, 
$\HC_*(x)=0$, when $x$ is a closed Reeb orbit with
$\HF_*(x)=0$. Moreover, $\HF_*(x^\ka)=0$ and $\HC_*(x^\ka)=0$ for all
$\ka$. Hypothetically, it is also possible that $\HC_*(x)=0$
while $\HF_*(x)\neq 0$, but we do not have any examples of
degenerate orbits for which this is the case.  When $x$ is non-degenerate,
clearly $\HF_{\MUCZ(x)}(x)\neq 0$. However, 
$\HC_*(x)$ can vanish even when $x$ is non-degenerate. This is the case if
and only if $x$ is an even iteration of a hyperbolic orbit with
negative Floquet multipliers; see Example \ref{ex:good-vs-bad}.

Note that when the local Floer or local contact homology is trivial, the
definition of $\MUCZ(x)$ or of $|x|$ from (LF1) does not apply. When
$\HF_*(x)\neq 0$, e.g., in the non-degenerate case, we still set
$|x|=\MUCZ(x)-1$. Alternatively, in Section \ref{sec:S^3}, we
use, as a matter of convenience, a suitable \emph{ad hoc} definition of
$|x|$.

The next fact, stated for the sake of brevity only in the contact
case, is an immediate consequence of Proposition \ref{prop:sdm1} (and
Theorem \ref{thm:ch-vs-fh} applied to an SDM).

\begin{itemize}

\item[(LF3)] A simple orbit $x$ is an SDM (with respect to some
  trivialization of $\ker\alpha\!\mid_x$) if and only if one of following
  conditions is satisfied:

\begin{enumerate}

\item[(i)] $\HC_{2m\ka_i}(x^{\ka_i})\neq 0$ (or
  equivalently $\HC_{2m\ka_i}(x^{\ka_i})=\Q$) for some $m\in\Z$ and some sequence of
  admissible iterations $\ka_i\to\infty$;

\item[(ii)] $x$ is degenerate and $\HC_{2m}(x)\neq 0$ (or
  equivalently $\HC_{2m}(x)=\Q$), where $\Delta(x)=2m$ and $m\in\Z$.

\end{enumerate}

\end{itemize}

\section{Proof of Theorem \ref{thm:S^3}}
\label{sec:S^3}

Our goal in this section is to derive Theorem \ref{thm:S^3} from
Theorem \ref{thm:sdm}.  
Thus let $\alpha$ be a contact form supporting the standard 
contact structure on $S^3$. The key property of $\alpha$  we need is that $\HC_*(\alpha)$ is concentrated in even
degrees $2,4,\ldots$ and that $\HC_*(\alpha)=\Q$ in these degrees; see, e.g.,
\cite{Bou} and references therein. (Here the indices
are evaluated with respect to a global symplectic trivialization of
$\ker\alpha$ on $S^3$; such a trivialization is unique up to
homotopy.)

Arguing by contradiction, let us assume that $\alpha$ has only one
simple closed Reeb orbit $x$. We will show that $x$ is then an SDM, and
hence, by Theorem \ref{thm:sdm}, the Reeb flow of $\alpha$ has
infinitely many periodic orbits.

First, observe that, by (LF1), every iteration $x^\ka$ can contribute
to the homology in at most one degree
$|x^\ka|=\MUCZ(x^\ka)-1$, even when $x^\ka$ is degenerate. In particular,
for some values of $\ka$ we must have $|x^\ka|=2,4,\ldots$ or
equivalently $\MUCZ(x^\ka)=3,5,\ldots$.

As an immediate consequence, we see that $x$ cannot be
hyperbolic. Indeed, if $x$ is hyperbolic and its Floquet multipliers
are positive, $\MUCZ(x^\ka)$ is even for all $\ka$. If the Floquet
multipliers are negative, we have $\MUCZ(x^\ka)=\ka(2\ell+1)$, for
some integer $\ell$, and $\dim\HF_{\ka(2\ell+1)}(x^\ka)=1$, which is
also impossible. (In fact, as is easy to see, we would have $\dim
\HC_{\kappa(2l+1)}(x^\kappa)=1$ if $\kappa$ is odd and
$\HC_{\kappa(2l+1)}(x^\kappa)=0$ if $\kappa$ is even.)

Thus $x$ is elliptic, i.e., its Floquer multipliers have absolute
value one. Let $\Delta=2m+2\lambda$, where $m\in \Z$ and $\lambda\in
[0,\,1)$, be the mean index of $x$. Then $\Delta(x^\ka)=\ka\Delta$
and $0<\Delta<3$.

Next let us show that $x$ cannot be strongly non-degenerate. (An orbit
is strongly non-degenerate when all its iterations are
non-degenerate.)  First, observe $x^\ka$ is strongly non-degenerate if
and only if $\lambda\not\in \Q$. Furthermore, in this case
$\MUCZ(x^\ka)=1+2m\ka+2\lfloor \ka\lambda \rfloor$ (see, e.g.,
\cite{Lo}), and hence
$$
|x^\ka|=2m\ka+2\lfloor
\ka\lambda \rfloor.
$$
As a consequence, $|x^\ka|$ assumes only even values starting with
$|x|=2$. (Hence $m=1$.) However, some arbitrarily large even degrees
are skipped when $\lfloor \ka\lambda\rfloor$ jumps from one integer
value to the next. This is impossible. Alternatively, the fact that
$x$ cannot be strongly non-degenerate, including the hyperbolic case,
also follows immediately from the results of \cite {HWZ:03}. 

Next, recall that in general, when $\HC_*(x^\ka)\neq 0$, we denote by
$|x^\ka|$ the degree in which this space does not vanish. (By (LF1),
this degree is unique.) We have
\begin{equation}
\label{eq:deg}
|x^\ka|=2m\ka+2\lfloor
\ka\lambda \rfloor+\eps_\ka,
\end{equation}
where $\eps_\ka$ is either $-2$ or $-1$ or $0$. (When $x^\ka$ is
non-degenerate, $\eps_\ka=0$.) For all $\ka$ such that
$x^\ka$ is degenerate, $\eps_\ka$ takes the same
value due to (LF2). 

When $x^\ka$ is degenerate, we can have $\HC_*(x^\ka)=0$ in all
degrees; see Section \ref{sec:low-dim}. In this case, we use
\eqref{eq:deg} with, say, $\eps_\ka=0$ to define $|x^\ka|$. Note that
with this convention $\eps_\ka$ is still independent of $\ka$ when
$x^\ka$ is degenerate.

Observe now that necessarily $m=0$ or $m=1$.

Assume first that $m=1$. Then either $\lambda=0$ or $\lambda>0$. In
the former case, by (LF3), $x$ is an SDM (with respect to a twisted
trivialization along $x$) as required. In the latter case, we can
assume that $\lambda$ is rational. Then
$\lambda=1/\ell$, for some integer $\ell >1$. (Otherwise, $|x^\ka|$ would
skip an even value when $\lfloor \ka\lambda\rfloor$ jumps from $0$ to
$1$.) Thus $|x^\ka|$ is the sequence
$$
2,  \ldots, 2(\ell-1), \, 2\ell+\eps_\ell, \,
2(\ell+1)+2,\ldots, 2(2\ell-1)+2,\, 4\ell+2+\eps_{2\ell}, \,2(2\ell+1)+4,\ldots,
$$ 
which is clearly impossible.

It remains to examine the case $m=0$. Then $\Delta=\lambda\in (0,\,1)$
and again $\lambda\in \Q$. As in the
case $m=1$, it is easy to see that $\lambda=1/\ell$ for some integer
$\ell>1$. As a consequence, $x^\ka$ degenerates when $\ka$ is
divisible by $\ell$, and 
\begin{equation}
\label{eq:degs}
|x^\ka|=\underbrace{0,\ldots,0}_{\ell-1},\,1,\,
\underbrace{2,\,\ldots,2}_{\ell-1},\,3,\ldots .
\end{equation}

There are two ways to rule out this possibility. (We are not aware of
any example of the germ of a Hamiltonian diffeomorphism with this
behavior, but we do not have a ``local'' proof that this is impossible.)  

The first one is to invoke some of the results of Hofer--Wysocki--Zehnder;
see \cite{HWZ1,HWZ2}. Namely, by \cite[Theorems 1.7 and 1.8]{HWZ2},
there exists a non-constant finite energy plane in the symplectization
of $S^3$, injective in $S^3$ and asymptotic to a simple closed Reeb orbit
$y$. By \cite[Theorem 4.3]{HWZ1}, $\MUCZ(y)\geq
2$. However, in our setting, the only simple orbit is $x$, which is
non-degenerate, and $\MUCZ(x)=1$.

Alternatively, using the full strength of Theorem \ref{thm:ch-vs-fh} and
more, one can argue as follows. First, one shows that in dimension
three
\begin{equation}
\label{eq:c-persistence}
\dim \HC_*(z^\ka)=\dim\HC_*(z)
\end{equation}
up to a shift of degree, when $z$ is degenerate. Taking into account
Theorem \ref{thm:ch-vs-fh}, we can express \eqref{eq:c-persistence} as
$\dim\HF_*^{\Z_{\ka\nu}} (z^\ka)= \dim\HF_*^{\Z_{\nu}} (z)$, again up
to a shift, where $z=y^\nu$ and $y$ is simple. In fact,
\eqref{eq:c-persistence} is a contact (or equivariant) analogue of
Proposition \ref{prop:persistence}.  Using \eqref{eq:degs} and
\eqref{eq:c-persistence} for $z=x^\ell$ and setting $r=\dim \HC_*(z)$,
we see that the spaces $\HC_*(x^\ka)$ fit together to form a complex
$$
0\leftarrow\Q^{\ell-1}\leftarrow \Q^r\leftarrow\Q^{\ell-1}\leftarrow
\Q^r\leftarrow\cdots
$$
with the first non-zero term $\Q^{\ell-1}$ occurring in degree
zero. The homology of this complex is $\HC_*(\alpha)$, i.e., it is
$\Q$ in every even degree starting with two and zero otherwise. We
claim that this is impossible. Indeed, let $p_j\geq 0$ be the rank of the
differential $\Q^r\leftarrow\Q^{\ell-1}$ from degree $2j$ to degree
$2j-1$. Thus, for instance, $p_1=r-(\ell-1)$. Now, a simple inductive
argument shows that $p_j=j p_1+ (j-1)\to\infty$, which contradicts 
the obvious restriction $p_j\leq r$.

\section{Proof of Theorem \ref{thm:sdm}}
\label{sec:prf-sdm}
The argument is a rather faithful adaptation of the proof of the
Conley conjecture given in \cite{GG:gaps} to the contact case. Hence
we only briefly outline the proof skipping numerous technical details.

\subsection{Filtered contact homology in the presence of an SDM}
\label{sec:ch-sdm}
As in the proof of the degenerate case of the Hamiltonian Conley
conjecture (see, e.g., \cite{Gi:CC,GG:gaps,Hi,He:CC}), Theorem
\ref{thm:sdm} is a consequence of the fact that the presence of an SDM
forces the filtered contact homology to be non-zero for certain
arbitrarily small action intervals located right above the action of
the SDM iterations.

To be more precise, let $(M^{2n-1},\ker\alpha)$ be a contact manifold
admitting a strong symplectically aspherical filling $(W,\omega)$. As
in Theorem \ref{thm:sdm}, assume that the Reeb flow of $\alpha$ has a
simple closed Reeb orbit $x$ which is a symplectically degenerate
maximum with respect to some symplectic trivialization of
$\ker\alpha\!\mid_x$.  Denote by $\HC_*(M,\alpha)$ the (filtered)
contact homology of $(M,\alpha)$, linearized by means of $W$, for the
collection of free homotopy classes $\{ [x^\ka]\mid \ka\in \N\}$. When
$x$ is contractible in $W$, the condition that
$c_1(TW)\!\mid_{\pi_2(W)}=0$ ensures that $\HC_*(M,\alpha)$ has a
well-defined grading. If $x$ is not contractible, we 
require, in addition, $(W,\omega)$ to be atoroidal, and fix a symplectic
trivialization of $TW\!\mid_{x^\ka}$ to have the grading well defined.
(In fact, it would be sufficient to fix, a trivialization of the
complex line bundle $(\bigwedge^n_\C
TW\!\mid_{x^\ka})^{\otimes 2}$; see, e.g., \cite{Bou,SFT,Es} for more
details.) To be more specific, in this case, we pick a trivialization of $TW\!\mid_x$
and equip $TW\!\mid_{x^\ka}$, for $\ka\geq 2$, with ``iterated
trivializations''. (When $x$ is contractible, it is equipped with a
trivialization which extends to a disk bounded by $x$.) From now on,
all degrees and (mean) indices are taken with respect to such fixed
reference trivializations unless explicitly stated otherwise.

Assume that all iterations $x^\ka$ are isolated. Let $2m$, where
$m\in\Z$, be the Maslov index of the trivialization with respect to
which $x$ is an SDM. Thus, by Proposition \ref{prop:sdm-Reeb},
$\HC_*(x^\ka)$ is concentrated and equal to $\Q$ in degree
$d_\ka=2m\ka+2n-4$. Denote by $c$ the action of $x$.  Then the action
of $x^\ka$ is $\ka c$.

\begin{Theorem}
\label{thm:ch-sdm}
For any sufficiently small $\eps>0$, 
\begin{equation}
\label{eq:hom}
\HC_{d_\ka+1}^{(\ka c,\,\ka c+\eps)}(M,\alpha)\neq 0
\end{equation}
for all large iterations $\ka>\ka_0(\eps)$.
\end{Theorem}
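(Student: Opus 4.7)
The strategy faithfully adapts the proof of the Hamiltonian Conley conjecture from \cite{GG:gaps}, using Theorem \ref{thm:ch-vs-fh} as the dictionary between local Floer and local contact data. My plan is to sandwich $\alpha$ between two explicit model contact forms whose filtered contact homology in the window $(\ka c,\,\ka c+\eps)$ can be read off directly, and to use the monotonicity of filtered linearized contact homology to force nontriviality of the middle term in degree $d_\ka+1$.

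First, I would apply Proposition \ref{prop:sdm2} to the Poincar\'e return map $\varphi_x$: for every $\eta>0$ the germ of $\varphi_x$ is generated by a Hamiltonian $K$ on a disk $B\subset\R^{2n-2}$ attaining a strict local maximum at the origin with $\|\d^2K_0\|<\eta$ and all eigenvalues zero. Lifting $K$ to an autonomous profile on a tube $U\cong B\times S^1$ around $x$ yields, after an action-preserving isotopy, a contact form $\alpha_K$ which equals $\alpha$ outside $U$ and in which each iterate $x^\ka$ appears as a degenerate maximum of action $\ka c$. I would then build on $M$ two model forms $\alpha_-^\ka\le\alpha_K\le\alpha_+^\ka$, differing from $\alpha_K$ only inside a smaller tube around $x$, where they are given by radial bump profiles $f_\pm^\ka(r)$ of the Conley-conjecture type. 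The form $\alpha_+^\ka$ is arranged so that, in the homotopy class $\ka[S^1]$ and the action window $(\ka c,\,\ka c+\eps)$, its only closed Reeb orbits form a small Morse--Bott family of ``boundary'' orbits whose local contact homology, computed through Theorem \ref{thm:ch-vs-fh} and a direct Morse calculation, contributes a single generator in degree $d_\ka+1$. The form $\alpha_-^\ka$ is arranged to carry \emph{no} closed Reeb orbits at all in the class $\ka[S^1]$ with action in $(\ka c,\,\ka c+\eps)$, so that $\HC_*^{(\ka c,\,\ka c+\eps)}(M,\alpha_-^\ka)=0$. Both constructions are possible for any prescribed $\eps$ once $\ka$ is large, via the smallness of $\|\d^2K_0\|$ inherited from the SDM hypothesis.

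The monotone sandwich $\alpha_-^\ka\le\alpha_K\le\alpha_+^\ka$ then induces a composition of filtered continuation maps
\begin{equation*}
0=\HC_{d_\ka+1}^{(\ka c,\,\ka c+\eps)}(M,\alpha_-^\ka)\to\HC_{d_\ka+1}^{(\ka c,\,\ka c+\eps)}(M,\alpha_K)\to\HC_{d_\ka+1}^{(\ka c,\,\ka c+\eps)}(M,\alpha_+^\ka),
\end{equation*}
whose total agrees with the direct monotonicity map for $\alpha_-^\ka\le\alpha_+^\ka$. A calculation in the model, imitating the one in \cite{GG:gaps} for the Hamiltonian case, shows that the distinguished generator at $\alpha_+^\ka$ lies in the image of this composition; since the first group vanishes, it must in fact lie in the image of the middle map, which is therefore nonzero. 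Because $\alpha_K$ and $\alpha$ coincide outside $U$ and are connected through forms with unchanged action spectrum in the relevant window, the conclusion transfers to $\alpha$, yielding \eqref{eq:hom}.

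The hardest point is the rigorous definition of the continuation maps for contact homology in the presence of iterated orbits: transversality for multiply-covered curves is generically obstructed, so the argument is only ``essentially complete'' in the sense explained in the introduction, and must ultimately be run either with multivalued perturbations \cite{HWZ3,HWZ4} or within equivariant symplectic homology. A secondary technical point is the degree bookkeeping placing the nonzero class in $d_\ka+1$ rather than $d_\ka$: this reflects the fact that in any Morse model of a degenerate maximum the ``barrier'' cycle just above the critical value sits one degree higher than the class carried by the maximum itself, and is propagated into contact homology through Theorem \ref{thm:ch-vs-fh}.
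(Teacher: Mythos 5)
Your overall architecture (local SDM model via Proposition \ref{prop:sdm2}, pinching $\alpha$ between two bump-function model forms, and comparing filtered contact homology by cobordism maps) is the right one and matches the paper. But the core of your homological argument is logically broken. You arrange the outer terms of your sandwich so that one of them vanishes, $\HC_{d_\ka+1}^{(\ka c,\,\ka c+\eps)}(M,\alpha_-^\ka)=0$, and then assert that the distinguished generator of the other outer term ``lies in the image of the composition''. A composition that factors through the zero group is the zero map, so its image cannot contain a nonzero class; conversely, if you reverse the arrows, a composition landing in the zero group detects nothing about the middle term. A squeezing argument can only establish $\HC^{(\ka c,\,\ka c+\eps)}_{d_\ka+1}(M,\alpha)\neq 0$ if the \emph{composed} map between the two outer groups is itself nonzero, which forces both outer groups to be nonzero. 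This is exactly what the paper does: it arranges $\HC_{d_\ka+1}^{(\ka c,\,\ka c+\eps)}(U,\alpha_+,\ka[S^1])\cong\Q$ \emph{and} $\HC_{d_\ka+1}^{(\ka c,\,\ka c+\eps)}(U,\alpha_-,\ka[S^1])\cong\Q$ (the latter via the rotationally symmetric model $F$ with a non-degenerate maximum of small eigenvalues, related to $H_-$ by an isospectral deformation), and then shows the cobordism map $\HC(\alpha_+)\to\HC(\alpha)\to\HC(\alpha_-)$ — note the direction, from the positive to the negative end — is an isomorphism $\Q\to\Q$ in degree $d_\ka+1$, using a long exact sequence in the action filtration around the critical value $\ka c$. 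Your claim that $\alpha_-$ can be taken orbit-free in the window is also not what the standard construction yields: the nontrivial orbits of a decreasing bump profile touching $H$ at the SDM do produce homology just above action $\ka c$, and this is precisely the generator one needs.

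A secondary but genuine omission: since $\alpha_\pm$ agree with $\alpha$ outside $U$, the global groups $\HC_*^{(\ka c,\,\ka c+\eps)}(M,\alpha_\pm)$ may receive contributions from closed Reeb orbits of $\alpha$ far from $x$, so your ``calculation in the model'' does not directly compute them. One needs the direct sum decomposition of the filtered complex (Lemma \ref{lemma:direct-sum} in the paper), valid for action windows shorter than the energy $\eps_U$ required for a holomorphic curve to cross the shell $Y\times S^1$, to split off the summand $\HC_*^I(U,\beta,\ka[S^1])$ generated by orbits in $U$ and to check that the cobordism maps respect this splitting. Your final remarks on transversality for multiply covered curves and on the degree shift to $d_\ka+1$ are accurate and consistent with the paper.
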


\begin{Remark}
\label{rmk:interval}
Strictly speaking here, as in similar results in the Hamiltonian
setting (see, e.g., \cite[Proposition 4.7]{Gi:CC}, \cite[Theorem
1.7]{GG:gaps} and \cite[Theorem 1.5]{He:CC}), one has to replace the
action interval $(\ka c,\,\ka c+\eps)$ by $(\ka c+\delta,\,\ka
c+\eps)$ for some arbitrarily small $\delta\in (0,\,\eps)$, and assume that
$\eps$ is not in a certain zero measure set, to comply with the
official definition of the filtered contact or Floer homology
requiring the end points of the interval to be outside the action
spectrum.  Furthermore, in \eqref{eq:hom}, we in fact have
$\HC_{d_\ka+1}^{(\ka c,\,\ka c+\eps)}(M,\alpha)\cong \Q$ as in the
Hamiltonian case.
\end{Remark}

Theorem \ref{thm:sdm} follows immediately from Theorem
\ref{thm:ch-sdm}.

\begin{Remark}
  Note that in Theorem \ref{thm:sdm} we make no claim concerning the
  growth rate of the number of closed Reeb orbits. Recall in this
  connection that, in the settings of the Conley conjecture, for any
  Hamiltonian diffeomorphism with isolated fixed points,
  every sufficiently large prime occurs as a period of a simple
  periodic orbit; \cite{Gi:CC,GG:gaps,He:CC,Hi,SZ}. In a similar vein,
  the number of closed geodesics on $S^2$ with length less than or
  equal to $\ell$ grows at least as prime numbers, i.e., it
  is bounded from below by $\const\cdot\ell/\log\ell$; \cite{Hi93}.
\end{Remark}

\begin{Remark}
\label{rmk:min2}
An assertion similar to Theorem \ref{thm:ch-sdm} holds when an SDM
orbit is replaced by an SDMin Reeb orbit. In this case we have
$\HC_{2m\ka-3}^{(\ka c-\eps,\,\ka c)}(M,\alpha)\neq 0$. As has been
mentioned in Remark \ref{rmk:min0}, this result partially generalizes
the main theorem of \cite{Hi97} asserting, in the presence of SDMin
geodesic, the existence of infinitely many prime closed geodesics on
$S^2$ with length in certain intervals.
\end{Remark}

\subsection{Proof of Theorem \ref{thm:ch-sdm}}
\subsubsection{Local model and its extension} Let a simple Reeb orbit
$x$ be an SDM of the Reeb flow of $\alpha$ on $M^{2n-1}$ with respect
to a trivialization of $\ker\alpha\!\mid_x$.  There exists a tubular
neighborhood $U=B\times S^1$ (where $B$ is the ball $B_R$ of radius
$R$ in $\R^{2n-2}$), adapted to the trivialization in the obvious
sense, and a Hamiltonian $H\colon B\times S^1\to (0,\,\infty)$ such
that
\begin{itemize}
\item[(H1)] $\alpha=\lambda+Hdt$, where $\lambda=(p\,dq-q\,dp)/2$ is the
  $\U(n-1)$-invariant primitive of the standard symplectic structure
  on $B$;
\item[(H2)] $H_t(0)=c$ is a strict local maximum of $H_t$ for all $t\in
  S^1$, and $c$ is independent of $t$;
\item[(H3)] $\d^2H_t$ at $0$ has zero eigenvalues.
\end{itemize}
This fact readily follows from Proposition \ref{prop:sdm2}; see the
proof of \cite[Lemma 5.2]{HM}. Note that for any $\eta_0>0$ this can
be done so that $\|\d^2 H_t\|<\eta_0$ on $B$ for all $t\in
S^1$. (Recall also that the eigenvalues of a quadratic form on a
symplectic vector space are, by definition, the eigenvalues of the
linear symplectic vector field it generates; see Example
\ref{exam:sdm}.)

Next, as in \cite{Gi:CC,GG:gaps}, let us consider functions
$H_\pm\colon B\to (0,\infty)$ such that 
\begin{equation}
\label{eq:H_pm}
H_+\geq H_t\geq H_- \textrm{ for all $t\in S^1$}, 
\end{equation}
with equality attained only at $0\in B$, and $H_+\equiv c_+>0$ and
$H_-\equiv c_->0$ on the region $Y=B_R\setminus B_{R/2}$ near the
boundary of $B$. These functions are also required to be $C^0$-close
to $H$ and satisfy some additional conditions.

Namely, set $\rho=\|z\|$ on $B$.  Then $H_+$ is a
bump--function on $B$, constant near $0$ and on $Y$, and depending only on
$\rho$. In other words, we have

\begin{itemize}
\item[(H$_{+}$)] $H_+$ is a decreasing function of
  $\rho$, equal to $c$ for $\rho$ close to $0$ and equal to $c_+>0$ for 
$\rho\in [R/2,\, R]$. 
\end{itemize}

The function $H_-$ is obtained by composing a certain function $F$ on
$B$ with a (linear) symplectic transformation $\Psi\colon
\R^{2n-2}\to\R^{2n-2}$, i.e., $H_-=F\circ \Psi$. The function $F$ is
also a bump function on $B$ depending only on $\rho$, but it has a
non-degenerate maximum at $0\in B$. To be more specific, we have

\begin{itemize}
\item[(H$_{-}$1)] $F$ is a decreasing function of
  $\rho$, equal to $c$ at $0$ and equal to $c_->0$ on~$Y$;

\item[(H$_{-}$2)] $F$ has a non-degenerate maximum at $0$ and the
  eigenvalues of the quadratic form $\d^2 F(0)$ on the symplectic
  vector space $T_0 B$ are small;

\item[(H$_{-}$3)] there exists a family of linear symplectic
  transformations $\Psi_s$, $s\in [0,\,1]$, with $\Psi_0=\id$ and
  $\Psi_1=\Psi$ such that $F_s:=F\circ \Psi_s\leq H_+$ and $F_s\equiv
  c_-$ on~$Y$ for all $s$.

\end{itemize}
The existence of functions $H_\pm$ with these properties is
guaranteed by (H2) and (H3). (Note that the main point of (H$_-$2) is
that for any $\eta_1>0$ one can construct $F$ such that the eigenvalues
of $d^2F(0)$ are smaller than $\eta_1$.) In addition, the functions
$H_+$ and $F$ are required to satisfy some other natural conditions such
as those for the bump functions in \cite[Section 7.2]{Gi:CC}. These
conditions are not explicitly used in the argument below.

\begin{Remark}
  It is worth pointing out that this construction of $H_-$ differs
  slightly from the one used in \cite{Gi:CC}, where $H_-$ is obtained
  from $F$ by applying a loop of Hamiltonian diffeomorphisms to the
  flow of $F$ rather than composing $F$ with $\Psi$. Both, here and in
  the Hamiltonian setting, either construction can be utilized, for
  only the existence of an isospectral deformation from $F$ to $H_-$
  majorated by $H_+$ is essential; see (H$_-3$). However, we find our
  present approach somewhat simpler than the original one. Furthermore,
  in both settings, it would be convenient to replace the condition
  that $H\equiv c$ near zero in (H$_+$) by that $\d^2 H(0)=0$.  (Then,
  for instance, $H_+$ could be taken independent of $\eps$.)  However,
  we keep (H$_+$) in its present form to simplify references to the
  arguments in \cite{Gi:CC,GG:capacity,GG:gaps}.
\end{Remark}

Consider now the symplectic manifold $\Pi=U\times L=B\times S^1\times
L$ with the symplectic form $\hat{\omega}=\omega+dh\wedge dt$, where
$L\subset \R$ is some interval containing $c$, and $h$ is the
coordinate on $L$. We can choose $L$ to be small and such that $\Pi$
contains the graphs of $H_\pm$, $F$ and $H$. (Recall that the
functions $H_\pm$ are $C^0$-close to $H$, and clearly $H$ has small
variation on $U$.) There exists a symplectic embedding of $\Pi$ into
the symplectization $\hat{M}=M\times (0,\,\infty)\subset T^*M$ sending
the graph of $H$ to $U$ and such that the pull-back of $\tau\alpha$,
where $\tau$ is the coordinate on $(0,\,\infty)$, is the form
$\lambda+hdt$. (Here we have identified $M$ and the graph of $\alpha$
in $\hat{M}$ and $M\times \{1\}\subset \hat{M}$.) From now on we treat
$\Pi$ as a subset of $\hat{M}$. It is important however to keep in
mind that such an embedding does not in general send the fibers $L$ of
the projection $\Pi\to U$ to the fibers $(0,\,\infty)$ of the
projection $\pi\colon \hat{M}\to M$.

Let $U_\pm$ be the images of the graphs of $H_\pm$ in
$\hat{M}$. These images are contact submanifolds in $\hat{M}$ (with the
restriction of $\tau\alpha $ taken as a contact form) lying, in the
obvious sense, above and below $U$.  Furthermore, the contact
submanifolds $U_\pm$ in $\hat{M}$ extend to contact submanifolds
$M_\pm$, contactomorphic to $(M,\ker\alpha)$ and lying again above and
below $M$ in $\hat{M}$. We use the notation $\alpha_\pm$ for the
resulting contact forms obtained by restricting $\tau\alpha$ to
$M_\pm$.

In other words, we have constructed two contact embeddings $M_\pm$ of
$M$ into $\hat{M}$, lying above and below $M$ and $C^0$-close to
$M$. The pull-backs $\alpha_\pm$ of $\tau\alpha$ to $M_\pm$ are
contact forms such that $\alpha_\pm =f_\pm\alpha$, where $f_+>1$ and
$f_-<1$, on the complement of, say, $B_{R/2}\times S^1$. Moreover, it
is not hard to show that the contact structures $\ker\alpha_\pm$ are
isotopic to $\ker\alpha$ with support in $U$.

\begin{Remark}
  For a suitable embedding $\Pi\hookrightarrow \hat{M}$, it is not
  hard to guarantee that $\alpha_+$ corresponds to the differential
  form $f_+ \alpha\!\mid_U$ with $f_+\geq 1$. Whether the image of
  $\alpha_-$ corresponds to a differential form or not, (i.e., if it
  is transverse to the fibers of $\pi$) is less clear. (This would be
  the case if $H_-$ was rotationally symmetric, i.e., when $H_-=F$.)
  However, the image of $\alpha_-$ corresponds to the differential
  form $f_-\alpha$ with $f_-<1$ near the boundary of $U$.)
\end{Remark}

Finally, the part of $\hat{M}$ containing $M$ and $M_\pm$ can be
symplectically embedded into the completion $\hat{W}=W\cup M\times
[1,\infty)$. As a consequence, we obtain exact symplectic fillings
$W_\pm$ of $(M,\alpha_\pm)$. Furthermore, the region $V\subset
\hat{M}$ bounded by $M_-$ and $M_+$ can be treated as a symplectic
cobordism from $M_-$ (the negative end) to $M_+$ (the positive
end). (Strictly speaking $M_+$ and $M_-$ intersect along $x$. For the
sake of brevity, we ignore this issue; for the intersection can be
eliminated by an arbitrarily small perturbation of $M_-$, which, in
turn, would only have an arbitrarily small effect on the action
filtration.) Clearly, $V$ factors as a composition of cobordisms from
$(M,\alpha_-)$ to $(M,\alpha)$ and from $(M,\alpha)$ to
$(M,\alpha_+)$.

Thus, for any (generic) action interval $I$, we obtain the
maps
$$
\HC_*^I(M,\alpha_+)\to \HC_*^I(M,\alpha)\to \HC_*^I(M,\alpha_-),
$$
where the contact homology of $(M,\alpha_\pm)$ is taken with respect
to the filling $W_\pm$ introduced above. To prove the theorem, it now
suffices to show that the map
\begin{equation}
\label{eq:map1}
\HC_*^{(\ka c,\,\ka c+\eps)} (M,\alpha_+)\to \HC_*^{(\ka c,\,\ka
  c+\eps)} (M,\alpha_-)
\end{equation}
is non-zero in degree $d_\ka+1$. 

Before turning to the proof of \eqref{eq:map1}, let us elaborate on
the inter-dependence of various ingredients of the above
construction. The constants $c_\pm$, depending only on $U$ and
$H$, are fixed first. The upper limit of $\eps$ and the lower bound
$\ka_0(\eps)$ for $\ka$ depend on $U$, $H$ and these constants. Then
we chose the function $H_+$, which depends on $H$ and $U$ and $\eps$
and, of course, $c_+$ . Finally, the function $H_-$ is then chosen
individually for each $\ka$. Specifics of extending $\alpha_\pm$
beyond $\Pi$ are immaterial.

\subsubsection{Direct sum decomposition of contact homology}
\label{sec:direct_sum}
Our goal in this section is to describe a contact analogue of the
direct sum decomposition of filtered Floer homology from \cite[Section
5.1]{GG:gaps}.

Consider the class of contact forms $\beta$ on $M$ satisfying the
following conditions:
\begin{itemize}
\item[(B1)] $\beta=\lambda+Kdt$ on $U$, where $K>0$ is a function on
  $U$ taking values in $L$, and $K$ is constant
  on $Y\times S^1$;
\item[(B2)] $\beta=f\alpha$ outside $U$.
\end{itemize}

\begin{Remark}
  The condition (B2) is not essential, but it is convenient to have it in
  what follows. Note that, as a consequence, $(M,\beta)$ is fillable
  by $W$ with a symplectic structure modified near the boundary. A
  variant of such a fillability requirement would be sufficient for
  our purposes. The assumption that $K$ takes values in $L$ can also
  be relaxed.
\end{Remark}

Among the forms in this class are $\alpha_\pm$ and the form
$\alpha_F$ which is equal to $\alpha_-$ outside $U$ and to
$\lambda+Fdt$ in $U$.

Set $a:=K\!\mid_{Y\times S^1}$.  Let $I$ be a interval, not containing
any of the points $\{k a\mid k\in \N\}$. We
require this interval to be sufficiently short: $|I|< \eps_U$, where
$|I|$ stands for the length of $I$ and the value of $\eps_U$ is to be
specified shortly.  Fix $\ka$, and let $\gamma_\ka$ be the free
homology class of $\ka S^1\subset U$ in $W$. (Note that $\gamma_\ka=0$
when $S^1$ is contractible in $W$.)

Even though we are working with the linearized contact homology, we can
view the filtered contact homology complex of $\beta$ as generated by
the closed Reeb orbits of a small non-degenerate perturbation of
$\beta$. Thus let us first assume that all closed Reeb orbits of
$\beta$ except those in $Y\times S^1$ are non-degenerate. (It would be
sufficient to require this only for the orbits with action in $I$ and
in the class $\gamma_\ka$.)  Let finally $\CC_*^I(U,\beta,\ka[S^1])$
be the graded subspace of the complex $\CC_*^I(\beta, \gamma_\ka)$
generated by the orbits lying in $U$ in the homotopy class $\ka [S^1]$. 

\begin{Lemma}
\label{lemma:direct-sum}
There exists a constant $\eps_U>0$, depending only on $U$ and
$\lambda$ (or rather on $\lambda\!\mid_{Y\times S^1}$) such that
$\CC_*^I(U,\beta,\ka[S^1])$ is a subcomplex and, in fact, a direct
summand in the complex $\CC_*^I(\beta, \gamma_\ka)$ whenever
$|I|<\eps_U$. In other words, we have the direct sum decomposition
$$
\CC_*^I(\beta,\gamma_\ka)=\CC_*^I(U,\beta,\ka[S^1])\oplus 
\CC_*^I(M\setminus U,\beta,\gamma_\ka)
$$
for some subcomplex $\CC_*^I(M\setminus U,\beta,\gamma_\ka)$. As a
consequence, we also obtain a direct sum decomposition
\begin{equation}
\label{eq:direct-sum}
\HC_*^I(\beta,\gamma_\ka)=\HC_*^I(U,\beta,\ka[S^1])\oplus 
\HC_*^I(M\setminus U,\beta,\gamma_\ka)
\end{equation}
regardless of whether $\beta$ is non-degenerate
or not, provided that the end points of $I$ are
outside the action spectrum of $\beta$, the interval $I$ does not
contain any of the points $\{ka\mid k\in\N\}$, and $|I|<\eps_U$.
\end{Lemma}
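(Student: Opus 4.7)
Set $a=K|_{Y\times S^1}$, constant by (B1). The plan has two stages. Stage 1 uses the hypothesis that $I$ avoids $\{ka:k\in\N\}$ to show that no generator of $\CC_*^I(\beta,\gamma_\ka)$ lies in the annular region $Y\times S^1$, so every generator is supported either in the core $B_{R/2}\times S^1\subset U$ or in $M\setminus U$. Stage 2 uses a monotonicity argument in the symplectization to rule out the pseudoholomorphic cylinders that would otherwise mix these two classes under the differential.

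\emph{Stage 1.} On $Y\times S^1$ we have $\beta=\lambda+a\,dt$, hence $d\beta=d\lambda$ and $\ker d\beta=\R\p_t$, so the Reeb vector field of $\beta$ there equals $\p_t/a$. Because this Reeb field is tangent to the $S^1$-fibers, any closed Reeb orbit of $\beta$ that meets $Y\times S^1$ lies entirely inside it and is an iterate of a fiber $\{p\}\times S^1$, with action $ka$ for some $k\in\N$. Since $I$ is disjoint from $\{ka:k\in\N\}$, no generator of $\CC_*^I(\beta,\gamma_\ka)$ has image in $Y\times S^1$; every generator is therefore confined either to $B_{R/2}\times S^1\subset U$ or to $M\setminus U$. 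Because $\pi_1(U)=\Z$ is generated by $[S^1]$, a loop lying in $U$ and representing $\gamma_\ka$ in $M$ must represent $\ka[S^1]$ in $U$, which identifies it as a generator of $\CC_*^I(U,\beta,\ka[S^1])$.

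\emph{Stage 2.} Pick a cylindrical almost complex structure $J$ on the symplectization $\hat M=M\times\R$ adapted to $\beta$ and adjusted on the neck $Y\times S^1\times\R$ to respect the product splitting: on horizontal directions $J$ agrees with a fixed $d\lambda$-compatible complex structure $J_B$ on $B$, and $J\p_s=\p_t/a$. Then $\pi_B\colon Y\times S^1\times\R\to B$ is $(J,J_B)$-holomorphic. Suppose, toward a contradiction, that $u\colon\R\times S^1\to\hat M$ is a $J$-holomorphic cylinder contributing to the differential with asymptotics $\gamma_\pm$ of actions in $I$, one end in the core of $U$ and the other in $M\setminus U$. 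On the preimage $V:=u^{-1}(Y\times S^1\times\R)$, the composition $v:=\pi_B\circ u|_V$ is a $J_B$-holomorphic curve in $B$ with boundary on $\p B_{R/2}\cup\p B_R$ whose image straddles the annulus $Y=B_R\setminus B_{R/2}$. The monotonicity lemma for $J_B$-holomorphic curves yields a positive lower bound $\eps_U$ on the $d\lambda$-area of $v$, depending only on $\lambda|_{Y\times S^1}$ and the radii $R/2,R$. On the other hand, since $\pi_B^*d\lambda=d\beta$ on $Y\times S^1\times\R$, the standard SFT energy estimate bounds the $d\lambda$-area of $v$ above by the action difference $A(\gamma_+)-A(\gamma_-)<|I|$. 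Taking $|I|<\eps_U$ produces the desired contradiction, so no such cylinder exists.

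\emph{Conclusion and main obstacle.} Stages 1 and 2 together give the chain-level direct sum decomposition for non-degenerate $\beta$; the decomposition \eqref{eq:direct-sum} for possibly degenerate $\beta$ then follows via the standard small non-degenerate perturbation used to define filtered contact homology. The principal technical subtlety is the construction of the almost complex structure $J$ with the prescribed product form on the neck that is simultaneously compatible with the transversality/perturbation scheme underlying linearized contact homology (cf.\ Remark \ref{rmk:multi-valued}); granting this standard SFT input, the monotonicity argument above is routine.
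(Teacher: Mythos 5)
Your argument is essentially the paper's: the paper proves the lemma by noting that the generators in the shell $Y\times S^1$ are excluded by the hypothesis on $\{ka\}$, and that any holomorphic curve crossing the shell projects to a holomorphic curve in $Y$ and hence must carry some minimal energy $\eps_U$, exactly the monotonicity/energy-quantization mechanism you spell out in Stage 2. Your write-up simply supplies more detail (the explicit product $J$ on the neck, the action-difference upper bound, and the perturbation step for degenerate $\beta$) than the paper's sketch, which defers to the analogous Floer-theoretic Lemma 5.1 of \cite{GG:gaps}.
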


\begin{Remark}
By definition, the complex $\CC_*^I(M\setminus U,\beta,\gamma_\ka)$
is generated by the orbits that are not in $\CC_*^I(U,\beta,
\ka[S^1])$. Note that in spite of the notation these orbits can be
contained in $U$. The exact nature of this complex and of its homology is
inessential for our purposes. 
\end{Remark}

This lemma is a contact analogue of \cite[Lemma 5.1]{GG:gaps} and it
can be proved exactly in the same fashion. (See also \cite{He:CC}.)
The key observation is that for any holomorphic curve $u$ in $\hat{M}$
its image in $M$ further projects to a holomorphic curve in
$Y$, i.e., the map $\pi(u)\cap (Y\times S^1) \to
Y$ is holomorphic. Since crossing the shell $Y$ requires for a
holomorphic curve to have some positive minimal energy $\eps_U$, the
lemma follows.

The important feature of the group $\HC_*^I(U,\beta,\ka[S^1])$ is
that it can be evaluated by entirely local means, by working only
with closed Reeb orbits in $U$ and holomorphic cylinders in the
symplectization of $U$.

Let now $\beta_0$ and $\beta_1$ be two forms in the above class.
Assume that these forms bound a symplectic cobordism from $\beta_1$
(the negative end) to $\beta_0$ (the positive end) in $\hat{M}$. To be
more specific, let us require (although these conditions can be
considerably relaxed) that $\beta_{i}=f_i\alpha$ outside $U$ with
$i=0,1$ and $f_0\geq f_1$ and that $\beta_i=\lambda+K_i dt$ in $U$
with $K_0\geq K_1$. Then the decomposition \eqref{eq:direct-sum} is
preserved by the natural map from $\HC_*^I(\beta_0,\gamma_\ka)$ to
$\HC_*^I(\beta_1,\gamma_\ka)$, provided that $I$ does not meet the
sets $\{k a_0\mid k\in \N\}$ and $\{k a_1\mid k\in \N\}$, where
$a_i=K_i\mid_{Y\times S^1}$. In other words, we have the commutative
diagram:
$$
\begin{CD}
\HC_*^I(\beta_0,\gamma_\ka)@. \,= \, @. \HC_*^I(U,\beta_0,
\ka[S^1])@. \, \oplus \, @. 
\HC_*^I(M\setminus U,\beta_0,\gamma_\ka)\\
@VVV @. @VVV @. @VVV\\
\HC_*^I(\beta_1,\gamma_\ka)@. \,= \, @. \HC_*^I(U,\beta_1,\ka[S^1])@. \,\oplus \,@. 
\HC_*^I(M\setminus U,\beta_1,\gamma_\ka)
\end{CD} 
$$

Applying this to $\alpha_+=\beta_0$ and $\alpha_-=\beta_1$ with
$I=(\ka c,\ka c+\eps)$, we see that it suffices to show that the map
\begin{equation}
\label{eq:map2}
\HC_{d_\ka+1}^{(\ka c,\,\ka c+\eps)} (U,\alpha_+,\ka[S^1])\to \HC_{d_\ka+1}^{(\ka c,\,\ka
  c+\eps)} (U,\alpha_-,\ka[S^1])
\end{equation}
is non-zero when all of the above requirements are satisfied. Note
that the map \eqref{eq:map2} is completely independent of the behavior
of $\alpha_\pm$ outside $U$ or of the topology of $(M,\ker\alpha)$,
which is why the particulars of extending $\alpha_\pm$ beyond $U$ are not
essential for the proof. We will examine this map more closely in
the next section.

\begin{Remark}
\label{rmk:ch-vs-fh}
  By the very definition, the complex $\CC_*(U,\beta,\ka[S^1])$ is
  generated by the $\ka$-periodic orbits of $K$ (with the exception of
  the bad orbits). Hence, the first step in
  calculating the homology $\HC_*(U,\beta,\ka[S^1])$ is that of
  analyzing the periodic orbits of $K$, which is a problem in
  Hamiltonian dynamics; see Section \ref{sec:homology}. Taking this
  point one step further, one can show that
  $\HC_*^{I}(U,\beta,\ka[S^1])=\HF_*^{\Z_\ka,I}(K)$, where the right
  hand side is the $\Z_\ka$-equivariant filtered Floer homology of $K$
  on $B$, which is defined as long as $|I|<\eps_U$. The proof of
  this fact is completely analogous to the proof of Theorem
  \ref{thm:ch-vs-fh}. This consideration applies to the cobordism maps
  as well.
\end{Remark}

\begin{Remark}
  The condition that the interval $I$ does not intersect the set
  $\{ka\mid k\in\N\}$ is purely technical and can be eliminated by
  slightly modifying our requirements on the form $\beta$.  This can
  be done by constructing a contact analogue of the direct sum
  decomposition of Floer homology from \cite{He:CC}; cf.\ \cite{Us}.
\end{Remark}

\subsubsection{Homological calculation} 
\label{sec:homology}
Our goal is to show that the map \eqref{eq:map2} is non-zero, which
implies the assertion of the theorem. In this section, we will prove
that, for a suitable choice of the parameters of the construction,
this map is in fact an isomorphism from $\Q$ to $\Q$:
\begin{equation}
\label{eq:map3}
\Q=\HC_{d_\ka+1}^{(\ka c,\,\ka c+\eps)}
(U,\alpha_+,\ka[S^1])\stackrel{\cong}{\longrightarrow} \HC_{d_\ka+1}^{(\ka c,\,\ka
c+\eps)} (U,\alpha_-,\ka[S^1])=\Q.
\end{equation}

The proof of \eqref{eq:map3} is based, in particular, on the following
result applied to the contact homology localized to $U$ as above. 

\begin{Proposition}[Invariance of the Filtered Contact Homology]
\label{Prop:invariance}
  Let $\beta_s$, $s\in [0,\,1]$, be a family of contact forms on $M$
  and $I_s$ a family of intervals such that for every $s$ the end
  points of $I_s$ are outside the action spectrum of $\beta_s$. Then
  the contact homology spaces $\HC_*^{I_s}(\beta_s)$ are
  isomorphic. Furthermore, assume that the forms $\beta_s$ foliate, in
  the obvious sense, a cylindrical cobordism from $\beta_1$ to
  $\beta_0$ and the interval $I_s=I$ is fixed. Then the natural
  map $\HC_*^I(\beta_0)\to \HC_*^I(\beta_1)$ is an
  isomorphism. Finally, to have an isomorphism in a particular
  degree $*=k$, it is sufficient to assume only that the end points of $I_s$
  are outside the action spectra corresponding to the
  orbits of degree $k$ and $k\pm 1$.
\end{Proposition}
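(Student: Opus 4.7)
The plan is to prove Proposition \ref{Prop:invariance} by the standard compactness-plus-continuation argument that parallels filtered Floer/symplectic homology; the only new input is careful bookkeeping of which action values are allowed to cross the endpoints of $I_s$. First I would observe that for every $s_0 \in [0,\,1]$ the action spectrum of $\beta_{s_0}$ is closed and nowhere dense, and by hypothesis the endpoints of $I_{s_0}$ lie in its complement. Continuity of the action functional in the pair $(\beta, x)$, together with continuity of $I_s$ in $s$, implies that the endpoints of $I_s$ remain outside the spectrum of $\beta_s$ for all $s$ in a small open neighborhood of $s_0$. By compactness of $[0,\,1]$ it therefore suffices to establish the isomorphism locally and then concatenate the resulting local isomorphisms.

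For such a short subinterval $[s_0, s_1]$, the family $\{\beta_s\}_{s\in [s_0,s_1]}$ determines a cylindrical exact symplectic cobordism in $\hat M$ interpolating between $\beta_{s_0}$ at the positive end and $\beta_{s_1}$ at the negative end. The associated continuation map on the linearized contact complex is defined by counting rigid finite-energy holomorphic cylinders in this cobordism asymptotic to good Reeb orbits at the two ends. Since no action value of a Reeb orbit of $\beta_s$ crosses either endpoint of $I_s$ as $s$ ranges over $[s_0, s_1]$, this chain map restricts to a map of filtered complexes $\CC_*^{I_{s_0}}(\beta_{s_0}) \to \CC_*^{I_{s_1}}(\beta_{s_1})$. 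Reversing the cobordism produces the inverse map up to chain homotopy, so the induced map on filtered homology is an isomorphism. Chaining such local isomorphisms along $[0,\,1]$ yields the first assertion; the second assertion is the observation that when $\beta_s$ already globally foliates a cylindrical cobordism and $I_s = I$ is fixed, this continuation coincides with the natural cobordism map, which is therefore an isomorphism.

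For the degree-specific refinement, I would use that $\HC_k^{I_s}(\beta_s)$ depends only on the chain groups generated by good Reeb orbits of degrees $k-1$, $k$, $k+1$, since the differential changes the contact homology degree by $-1$ and is action-decreasing. Consequently the continuation map induces an isomorphism in degree $k$ provided no orbit of these three degrees crosses an endpoint of $I_s$ as $s$ varies, which is exactly the weaker hypothesis stated in the last sentence of the proposition.

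The main obstacle is the foundational one flagged throughout the paper: well-definedness of the continuation map and its inverse-up-to-chain-homotopy property rely on transversality for the moduli spaces of holomorphic cylinders in cobordisms, and in the presence of multiply covered orbits this currently requires the not-yet-fully-available machinery of multi-valued perturbations (cf.\ Remark \ref{rmk:multi-valued}). Granted these technical inputs, the argument is a direct transplantation of the classical Floer-theoretic continuation/reversal scheme to the contact setting and presents no further conceptual difficulties.
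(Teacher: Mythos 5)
Your proposal follows the same route the paper takes: the paper offers no independent argument for this proposition and simply asserts that it ``can be proved in exactly the same way as its Floer homological counterparts,'' citing \cite{BPS,Gi:coiso,Vi}, and what you have written is precisely that classical continuation scheme transplanted to the contact setting (local constancy on short parameter subintervals via the nowhere-density of the action spectrum, concatenation over a compact parameter interval, and the observation that the homology in degree $k$ only sees the chain groups in degrees $k-1$, $k$, $k+1$).

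One step, however, is stated in a way that would fail literally: ``Reversing the cobordism produces the inverse map up to chain homotopy.'' Exact symplectic cobordisms are directed, and the contact-homology cobordism map only goes from the positive end to the negative end; the ``reversed'' region between the two graphs in $\hat M$ is not an exact symplectic cobordism in the required sense, so there is no map in the opposite direction to serve as an inverse. The standard repair — which is also what one does in the filtered Floer-homology references the paper cites, where monotone homotopies likewise only go one way — is the sandwich/scaling argument: for $s_1$ close to $s_0$ choose $c>1$ close to $1$ so that one has genuine cobordisms $c\,\beta_{s_0} \succeq \beta_{s_1} \succeq c^{-1}\beta_{s_0}$, note the canonical identification $\HC_*^{cI}(c\beta)\cong\HC_*^{I}(\beta)$, and observe that the composition of the two cobordism maps is the map induced by rescaling, which acts as the identity on the filtered groups precisely because no action value crosses the (slightly moving) endpoints of the window. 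With that substitution your argument is complete modulo the transversality issues you correctly flag, which the paper also leaves to the (as yet unavailable) machinery of multi-valued perturbations.
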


This proposition can be proved in exactly the same way as its
Floer homological counterparts; see, e.g., \cite{BPS,Gi:coiso,Vi}.

As the first application, by (H$_-$3), we have the following commutative diagram
\begin{equation}
\label{eq:diag-iso}
\xymatrix{
{\HC_{*}^{I}
(U,\alpha_+,\ka[S^1])} \ar[d] \ar[rd] &\\
{\HC_{*}^{I} (U,\alpha_-,\ka[S^1])} \ar[r]^{\cong} & {\HC_{*}^{I} (U,\alpha_F,\ka[S^1])}
}
\end{equation}
where the form $\alpha_F$ is defined similarly to $\alpha_-$, but with
$F$ in place of $H_-$; see Section \ref{sec:direct_sum}. This is a
contact analogue of, say, the results in \cite[Sections 2.2.1 and
5.3.2]{GG:gaps}. As a consequence of \eqref{eq:diag-iso}, we may
replace $\alpha_-$ by $\alpha_F$ in \eqref{eq:map3}. Thus, to complete
the proof of the theorem, we need to establish the isomorphism
\begin{equation}
\label{eq:map4}
\Q=\HC_{d_\ka+1}^{(\ka c,\,\ka c+\eps)}
(U,\alpha_+,\ka[S^1])\stackrel{\cong}{\longrightarrow} \HC_{d_\ka+1}^{(\ka c,\,\ka
c+\eps)} (U,\alpha_F,\ka[S^1])=\Q.
\end{equation}

Let us now specify the parameters of the construction, needed to ensure
that \eqref{eq:map4} holds. As a starting point, the form $\alpha$ is
given and the set $\Pi$ is fixed in advance, as are 
the function $H$ satisfying the requirements (H1-H3) and the interval
$L$. Next we pick $c_\pm$. Here, in addition to $c_\pm\in L$, we require the
points $c_\pm$ to be relatively close to $c$, e.g.,
$|c-c_\pm|<c/2$. It is also convenient to take these points from the
set $\Q c$. Thus $c_\pm =p_\pm c/ q_\pm$ for some integers
$p_\pm$ and $q_\pm$.

As the next step, we determine the upper bound on
$\eps>0$. Namely, we have $\eps<\eps_U$ and $\eps$
is small relative to $|c-c_\pm|$ (e.g., $\eps<|c-c_\pm|/2$) and,
finally, $\eps<c/\max\{q_-,q_+\}$.  It is easy to see that under the
latter requirement, for any sufficiently large $\ka$, the
interval $I=(\ka c,\, \ka c+\eps)$ does not intersect the sets $\{k
c_\pm \mid k\in\N\}$. (Without the assumption that $\eps<c/\max\{q_-,q_+\}$,
this would be true only for an infinite sequence of iterations $\ka$.)
Hence the machinery from Section \ref{sec:direct_sum} applies. We pick
$\ka$ meeting the latter requirement and such that
\begin{equation}
\label{eq:ka}
\ka |c-c_\pm|>\pi R^2.
\end{equation}

The function $H_+$ (with $c_+$ as above) is required to meet the
condition (H$_+$) and be such that $\pi r^2<\eps$, where $r$ is the
radius of the ball on which $H\equiv c$. Note that this function can
be chosen independently of $\ka$.

Finally, we fix $H_-$ (with $c_-$ as above) satisfying
(H$_-$1-H$_-$3). Here, (H$_-$2) reads specifically as that the
eigenvalues of $\d^2 F(0)$ are so small (while still positive) that
the Conley--Zehnder index of $\ka\, \d^2 F(0)$ is $n-1$, i.e., all
eigenvalues of the latter quadratic form are smaller than $\pi$.

With these conditions in mind, there are several (ultimately not so
different) ways to establish \eqref{eq:map4} and thus finish the proof
of the theorem.

First of all, note that the fact that both of the homology groups in
\eqref{eq:map4} are indeed $\Q$ (under the assumptions (H$_-$2) and
\eqref{eq:ka}) can be proved by simple analysis of the $\ka$-periodic
orbits of $H_+$ and $F$, which is done in, e.g.,
\cite{Gi:CC,GG:capacity}. (Here the picture becomes particularly
transparent once the Morse--Bott approach to contact homology is
utilized; see \cite{Bou} and references therein. Note also that the
action of $\alpha_+$ on the corresponding orbit is approximately equal
to $\ka c+\pi r^2$ for large $\ka$ and $H_+$ constructed exactly as in
\cite[Section 7.3]{Gi:CC}.) Then to prove that the map \eqref{eq:map4}
is an isomorphism, it suffices to analyze the behavior of
$\ka$-periodic orbits for a suitable homotopy from $H_+$ to $F$ and
apply Proposition \ref{Prop:invariance}, cf.\ \cite{GG:capacity}.

Alternatively, one can argue as follows (cf.\ \cite{GG:gaps}). Let us
fix a sufficiently small parameter $\delta>0$ and suppressing $U$ and
$\ka[S^1]$, which are fixed now, in the notation let us write
$\HC_*^I(\beta)$ for $\HC_*^I(U,\beta,\ka[S^1])$. (Note that $\delta$
depends on $H_+$, $F$, $\eps$, and $\ka$. In particular, $\delta\ll
\eps$.)  Then, as is easy to see (cf.\ \cite{GG:capacity}),
$$
\HC_{d_\ka}^{(\ka c-\delta,\,\ka c+\delta)} (\alpha_+)=\Q=
\HC_{d_\ka}^{(\ka c-\delta,\,\ka c+\delta)} (\alpha_F)
$$
and, moreover, the natural continuation map between these spaces is an
isomorphism.  Here the main point is that the orbit $x^\ka$ is the
only generator of degree $d_\ka$ for the contact complexes of
$\alpha_+$ and $\alpha_F$, where for $\alpha_+$ we slightly perturb
$H_+$ to make $0\in B$ into a non-degenerate maximum; cf.\ \cite[Lemma
2.5]{Gi:CC}.

Next, focusing first on $\alpha_+$, we have
$$
\HC_{d_\ka+1}^{(\ka c-\delta,\,\ka c+\eps)} (\alpha_+)=0
\textrm{ and }
\HC_{d_\ka}^{(\ka c-\delta,\,\ka c+\eps)}
(\alpha_+)=0 .
$$
These identities are proved, for instance, by deforming
$H_+$ to a Hamiltonian without periodic orbits of required index and
applying Proposition  \ref{Prop:invariance}. Thus, from the long exact
sequence
$$
\cdots \to \HC_{*}^{(\ka c-\delta,\,\ka c+\delta)} (\alpha_+)
\to
\HC_{*}^{(\ka c-\delta,\,\ka c+\eps)} (\alpha_+)
\to
\HC_{*}^{(\ka c+\delta,\,\ka c+\eps)} (\alpha_+)
\to\cdots,
$$
we conclude that the connecting map
$$
\HC_{d_\ka+1}^{(\ka c+\delta,\,\ka c+\eps)} (\alpha_+)
\stackrel{\cong}{\longrightarrow}\HC_{d_\ka}^{(\ka c-\delta,\,\ka
  c+\delta)} (\alpha_+)
=\Q
$$
is an isomorphism. 

This argument applies to $\alpha_F$ word-for-word and we also have the
isomorphism
$$
\HC_{d_\ka+1}^{(\ka c+\delta,\,\ka c+\eps)} (\alpha_F)
\stackrel{\cong}{\longrightarrow}\HC_{d_\ka}^{(\ka c-\delta,\,\ka
  c+\delta)} (\alpha_F)
=\Q
$$
Combining these facts, we arrive at the commutative diagram
$$
\begin{CD}
\HC_{d_\ka+1}^{(\ka c+\delta,\,\ka c+\eps)}(\alpha_+)@>\cong>>
\HC_{d_\ka}^{(\ka c-\delta,\,\ka c+\delta)}(\alpha_+)
@.\,=\,\Q\\
@VVV @VV{\cong}V @.\\
\HC_{d_\ka+1}^{(\ka c+\delta,\,\ka c+\eps)}(\alpha_F)@>\cong>>
\HC_{d_\ka}^{(\ka c-\delta,\,\ka c+\delta)}(\alpha_F)
@.\,=\,\Q\\
\end{CD} 
$$
where the horizontal maps and the second vertical map are
isomorphisms. Hence the first vertical map is also an isomorphism.
This (together with Remark \ref{rmk:interval}) implies~\eqref{eq:map4}.

Finally, one may take advantage of the isomorphism between
$\HC_*^{I}(U,\beta,\ka[S^1])$ and $\HF_*^{\Z_\ka,I}(K)$ from Remark
\ref{rmk:ch-vs-fh}. It is not hard to see that for the Hamiltonians
$H_\pm$ and $F$ the $\Z_\ka$-action on the Floer complex is trivial,
and hence proving \eqref{eq:map3} and \eqref{eq:map4} amounts to
establishing isomorphisms in the ordinary Floer homology, which is done
exactly in this setting in \cite{GG:capacity}.

\section{Appendix: Equivariant and invariant Morse homology}
\label{sec:Morse}
In this section, we illustrate the constructions from Section
\ref{sec:comparison} by analyzing a finite-dimensional 
model in which the arguments are particularly transparent.

Consider a Morse function $f$ on a closed oriented manifold $M$. To
define the Morse complex $\MC_*(f)$ over $\Z$ or $\Q$, we fix a
Riemannian metric such that the stable and unstable manifolds (for the
anti-gradient flow) of the critical points of $f$ intersects
transversely, and assign an orientation to every unstable
manifold. The complex $\MC_*(f)$ is then generated by the critical
points of $f$ and graded by the Morse index. The differential $\p
x=\sum \mu(x,y)y$ is defined by counting anti-gradient trajectories
connecting a point $x$ of index $k$ and with points $y$ of index $k-1$
with signs depending on whether the intersection orientation of a
trajectory (induced by the orientation of $M$ and those of the
stable/unstable manifolds) matches its orientation as a flow line. It
is important that the differential depends on the metric even though
the graded vector space $\MC_*(f)$ is completely determined by the
function $f$ only. Note also that the transversality requirement,
often referred to as the Morse--Smale condition, is satisfied for a
generic metric. The homology $\HM_*(M)$ of the complex $(\MC_*(f),\p)$
is isomorphic to $\H_{*}(M)$ and called the Morse homology of
$f$. (See, e.g., \cite[Chap.\ 7]{Jo} for more details.)

Let now $G$ be a finite group acting on $M$ by orientation preserving
diffeomorphisms. Assume that $f$ is $G$-invariant. Then the
equivariant Morse complex $\MC_*^G(f)$ of $f$ and the equivariant
Morse homology $\HM_*^G(f)$ are defined as follows.  Fix a sequence of
finite-dimensional smooth approximations $EG_N\to BG_N$ of the
universal bundle $EG\to BG$. More specifically, this is a sequence of
closed smooth manifolds $EG_N$ with free $G$-action, such that
$\pi_k(EG_N)=0$ for $k=1,\ldots, k_N\to\infty$, and a sequence of
$G$-equivariant embeddings $EG_N\hookrightarrow EG_{N+1}$. We set
$BG_N=EG_N/G$. (Such approximations can be obtained, for instance, by
taking a faithful representation $G\to\U(m)$ and letting $EG_N$ be the
Stiefel manifold of unitary $m$-frames in $\C^N$ with $G$ acting via
the standard $\U(m)$-action. When $G=\Z_\ka$, we have $EG_N=S^{2N-1}$
with $\Z_\ka$ acting diagonally, and $BG_N$ is a lens space.)
Furthermore, pick a sequence of $G$-invariant Morse functions $h_N$ on
$EG_N$ such that $h_{N+1}\!\mid_{EG_N}=h_N$ and the critical points of
$h_N$ are necessarily critical points of $h_{N+1}$ of the same
index. Finally, we also require that all critical points of $h_{N+1}$
outside $EG_N$ have index grater than some $\mu_N\to\infty$. (This is
an extra condition on both $h_N$ and $EG_N$ needed to ensure that the
Morse complexes $\MC_*(f_N)$ of the functions $f_N$ on
the finite-dimensional approximations $M_N\to (EG\times M)/G$ defined below
stabilize as $N\to\infty$.)

The function $f+h_N$ descends to a smooth Morse function $f_N$ on the
quotient $M_N=(EG_N\times M)/G$. (The quotient is smooth since $G$
acts freely on $EG_N\times M$, and we should think of $M_N$ as a
finite-dimensional approximation to $(EG\times M)/G$.) Note that again
$f_{N+1}\!\mid_{M_N}=f_N$, the critical points of $f_N$ are
necessarily critical points of $f_{N+1}$ of the same index, and all
critical points of $f_{N+1}$ outside $M_N$ have index greater than
$\mu_N\to\infty$.  Pick a sequence of Riemannian metrics on $M_N$ such
that $M_N\subset M_{N+1}$ is invariant under the gradient flow of
$f_{N+1}$ and the Morse--Smale condition is satisfied for each
$f_N$. Clearly, there are natural maps of complexes $\MC_*(f_N)\to
\MC_*(f_{N+1})$ and, moreover, these complexes stabilize in every
fixed range of degrees as $N\to \infty$. By definition, the
equivariant Morse complex of $f$ is $\MC_*^G(f):=\varinjlim
\MC_*(f_N)$ and its homology $\HM_*^G(f)=\varinjlim \HM_*(f_N) $ is
the equivariant Morse homology of $f$. Note that this complex does
depend on the approximation scheme $(EG_N,h_N)$, while its homology is
independent of the approximation and isomorphic to the equivariant
homology $\H^G_{*}(M)$ of $M$.

Next let us assume that $M$ admits a $G$-invariant metric which
satisfies the Morse--Smale condition. In contrast with the
non-equivariant case, such metrics need not exist as simple examples
show. Define an action of $G$ on $\MC_*(f)$ by setting $g\in G$ to
send a critical point $x$, a generator of $\MC_*(f)$,
to $\pm g(x)$. Here $g(x)\in M$ is the image of $x$ under the map $g$
and the sign is determined by whether or not $g$ matches the orientations of
the unstable manifold of $x$ and the unstable manifold of $g(x)$. This
choice of signs guarantees that we indeed have a $G$-action on the
complex $\MC_*(f)$, i.e., that the action and $\p$ commute. Hence we
also obtain a $G$-action on $\HM_*(f)$. In particular, we have the
invariant subcomplex $\MC_*(f)^G$ and the invariant part $\HM_*(f)^G$
of homology.

All these constructions extend in a straightforward way to the filtered
Morse homology and to the local Morse homology (see, e.g.,
\cite[Section 3.1]{Gi:CC}).

Up to this point, the choice of the coefficient ring was immaterial:
it could be $\Z$ or any other ring or field. Moreover, our construction of
the equivariant Morse homology would go through for any compact Lie
group $G$. However, from this moment on, it becomes essential that all
the complexes and homology groups are taken over $\Q$, or more
generally over a field of zero characteristic, and that the group $G$
is finite. First, note that under these conditions, $\HM_*(f)^G$ is the
homology of the complex $\MC_*(f)^G$ and that $\HM_*(f)^G$ is
isomorphic to $\H_{*}(M/G)$. More importantly, we have

\begin{Proposition}
\label{prop:Morse}
When the ground field is $\Q$ and $G$ is finite, the
equivariant and invariant Morse homology groups are isomorphic:
$\HM_*^G(f)=\HM_*(f)^G$. The same holds for filtered and local
Morse homology.
\end{Proposition}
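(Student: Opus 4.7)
The plan is to construct, for each $N$, a chain level identification of $\MC_*(f_N)$ on the approximation $M_N=(EG_N\times M)/G$ with the $G$-invariants of a Künneth-type product complex on $EG_N\times M$, and then pass to the limit. Since the equivariant Morse homology is by construction $\HM_*^G(f)=\varinjlim\HM_*(f_N)$, it is enough to show that in a fixed range of degrees we have $\HM_*(f_N)=\HM_*(f)^G$ for all sufficiently large $N$, with maps compatible in $N$.

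First, I would fix metrics compatibly: take the given $G$-invariant Morse--Smale metric $g_M$ on $M$, choose a $G$-invariant Morse--Smale metric $g_N$ on $EG_N$ (which exists because $G$ acts freely on $EG_N$ and a generic $G$-invariant perturbation is Morse--Smale there), and put the product metric $g_N\oplus g_M$ on $EG_N\times M$. This descends to a well-defined Riemannian metric on $M_N$ because the $G$-action on $EG_N\times M$ is free and isometric. With these choices, the anti-gradient flow of $\tilde f_N:=f+h_N$ on $EG_N\times M$ is the product of the anti-gradient flows of $h_N$ and of $f$, so stable/unstable manifolds of critical points of $\tilde f_N$ split as products; the Morse--Smale condition on each factor yields the Morse--Smale condition for $\tilde f_N$. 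The usual product decomposition of broken trajectories then gives an isomorphism of chain complexes
\[
\MC_*(\tilde f_N)\;\cong\;\MC_*(h_N)\otimes\MC_*(f),
\]
which is $G$-equivariant with respect to the diagonal $G$-action.

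Next, because $G$ acts freely on $EG_N\times M$, the projection $p_N\colon EG_N\times M\to M_N$ is a Galois $G$-cover, and critical points (respectively, anti-gradient trajectories) of $f_N$ on $M_N$ lift to $G$-orbits of critical points (respectively, trajectories) of $\tilde f_N$ on $EG_N\times M$, with indices preserved and with the Morse--Smale condition transferring between $\tilde f_N$ upstairs and $f_N$ downstairs. Counting with signs and passing to $G$-orbits then yields a canonical isomorphism of chain complexes
\[
\MC_*(f_N)\;\cong\;\bigl(\MC_*(\tilde f_N)\bigr)^G\;\cong\;\bigl(\MC_*(h_N)\otimes\MC_*(f)\bigr)^G.
\]
This is the step I expect to be the main technical obstacle: one must check that the sign conventions used to define the $G$-action on $\MC_*(f)$ (orientation comparison on unstable manifolds) are compatible with the orientation-descent from $EG_N\times M$ to $M_N$, and that trajectory moduli spaces downstairs are counted, with signs, by the $G$-invariant part of the corresponding moduli spaces upstairs. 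Once that is set up, the remainder is formal.

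Finally, because $|G|<\infty$ and we work over $\Q$, the averaging operator $e=\frac1{|G|}\sum_{g\in G} g$ is a chain map projecting any $G$-complex $C$ onto $C^G$, which immediately gives $H_*(C^G)=H_*(C)^G$ for every $G$-complex over $\Q$. Applying this to $C=\MC_*(\tilde f_N)$ and using the Künneth formula, together with the trivial $G$-action on $\HM_*(h_N)$ (a $G$-invariant Morse--Smale setup on $EG_N$ computes $\H_*(EG_N;\Q)$ where the $G$-action is necessarily trivial in the stable range), we obtain
\[
\HM_*(f_N)\;=\;\bigl(\HM_*(h_N)\otimes\HM_*(f)\bigr)^G.
\]
As $N\to\infty$, $\HM_*(h_N)$ stabilizes to $\Q$ concentrated in degree $0$ in any fixed range of degrees, so taking the direct limit gives
\[
\HM^G_*(f)\;=\;\varinjlim_N\HM_*(f_N)\;=\;\bigl(\Q\otimes\HM_*(f)\bigr)^G\;=\;\HM_*(f)^G,
\]
as required. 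The filtered and local versions follow by the same argument because the product construction preserves the action filtration and localizes at $G$-orbits of critical points.
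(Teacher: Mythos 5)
Your argument is correct and follows essentially the same route as the paper: a $G$-invariant product Morse--Smale metric on $EG_N\times M$ giving the chain-level K\"unneth decomposition, the identification of $\MC_*(f_N)$ with the $G$-invariant part of the complex upstairs, and the fact that over $\Q$ taking invariants of a finite group action commutes with passing to homology. The only (cosmetic) difference is in the endgame: you use the averaging idempotent together with the K\"unneth theorem on homology, whereas the paper decomposes the chain complexes into isotypical components and observes that the nontrivial isotypical pieces of $\MC_*(h_N)$ are acyclic.
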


\begin{Remark}
  Even for filtered or local Morse homology, this proposition is but a
  very particular case of the identification $\H_*^G(M)=\H_*(M/G)$,
  which holds over $\Q$ for any action with finite stabilizers of a
  compact Lie group $G$ on any finite CW-complex. (We refer the reader
  to, e.g., \cite[Appendix C]{GGK} and references therein for this and
  other standard results on equivariant (co)homology used in this
  section.) However, what is important for us is that Proposition
  \ref{prop:Morse} is essentially Morse-theoretic, and hence can be
  translated to the realm of Floer homology; cf.\ the proof of
  Theorem~\ref{thm:ch-vs-fh}.
\end{Remark}

The following simple example illustrates the role of signs in the
construction of the invariant Morse complex.

\begin{Example}[Signs]
  Let $f$ be a hyperbolic quadratic form on $M=\R^2$ and let $G=\Z_2$
  act by central symmetry. Clearly, $f$ is a Morse function with only
  one critical point, the origin $0$. We denote by $x$ the
  corresponding generator of the local Morse homology
  $\MC_*(f,0)$. Thus $\MC_*(f,0)$ and $\HM_*(f,0)$ are both equal to
  $\Q$ and concentrated in degree zero. The central symmetry inverses
  the orientation of the unstable manifold of $0$, and hence sends $x$
  to $-x$. As a consequence, $\MC_*(f,0)^G=0$ and
  $\HM_*(f,0)^G=0$. Likewise, it is not hard to see (for instance,
  using the Morse--Bott construction of the equivariant Morse
  homology, cf.\ \cite[Lecture 3]{Bo}) that $\HM_*^G(f,0)=0$. Indeed,
  $\HM_*^G(f,0)$ is isomorphic to the homology of $\RP^{\infty}$ with
  twisted rational coefficients corresponding to the double cover of
  $\RP^\infty$. This homology is zero.
\end{Example}

\begin{proof}[Proof of Proposition \ref{prop:Morse}]
We will prove the proposition for the ground field $\C$. Then, as is
easy to show, the result for $\Q$ follows.

Under our assumptions on $f$, we can equip $EG_N\times M$ with a
product metric satisfying the Morse--Smale condition. 
Then $\MC_*(f+h_N)=\MC_*(f)\otimes \MC_*(h_N)$, and hence
$$
\MC_*^G(f)=\big(\MC_*(f)\otimes \MC_*(h)\big)^G, 
$$
where we set $\MC_*(h):=\varinjlim \MC_*(h_N)$.

Let us decompose the complexes $\MC_*(f)$ and $\MC_*(h)$, viewed as
representations of $G$, into the sum of isotypical components:
$$
\MC_*(f)=\MC_*(f)^G\oplus \bigoplus_{\sigma} V_\sigma
\textrm{ and }
\MC_*(h)=\MC_*(h)^G\oplus \bigoplus_{\eta} W_\eta,
$$
where direct sums range over all non-trivial irreducible complex
representations $\sigma$ and $\eta$ of $G$. It readily follows from
Schur's lemma that these are indeed direct sums of complexes.

We have
\begin{equation}
\label{eq:hom-decomposition}
\big(\MC_*(f)\otimes \MC_*(h)\big)^G= \big(\MC_*(f)^G\otimes \MC_*(h)^G\big)
\oplus\bigoplus_{\sigma,\eta} \big(V_\sigma\otimes W_\eta\big)^G.
\end{equation}
For a product of an isotypical component corresponding to the trivial
representation and the one corresponding to $\sigma$ or $\eta$ (e.g.,
$\MC_*(f)^G\otimes W_\eta$) is again $\sigma$- or $\eta$-isotypical, and
hence its invariant part is zero. 

The homology of the complex $\MC_*(h)^G$ is the group homology
$\H_*(G)$ and the complex $\MC_*(h)$ is acyclic (except degree
zero). Since $G$ is finite, $\H_*(G;\C)=\C$, concentrated in degree
zero. It follows that all complexes $W_\eta$ are acyclic. Thus
only the first term in \eqref{eq:hom-decomposition} makes a
non-trivial contribution to the homology and $\HM_*^G(f)=\HM_*(f)^G\otimes
\C=\HM_*(f)^G$ as required.
\end{proof}

\begin{Remark}[Multi-valued perturbations] 
\label{rmk:multi-valued2}
A generalization of the proposition to the case where there is no
$G$-invariant metric satisfying the Morse--Smale condition would have
to rely on a variant of the machinery of multi-valued
perturbations. (See \cite{FO,FOOO,LT,HWZ3,HWZ4} and references
therein, and also Remark \ref{rmk:multi-valued}.) This machinery
should enable one to construct a version of the Morse complex with a
natural $G$-action even when a $G$-invariant Morse--Smale metric does
not exist. There is a good conceptual understanding of how this could
be done, but this is still a non-trivial task and, to the best of our
knowledge, there are no published accounts of such a
construction. (Once this is accomplished, the proof of Proposition
\ref{prop:Morse} should go through word-for-word.) This problem can be
thought of as a Morse theoretic analogue of the transversality problem
arising in the definition of the contact homology, cf.\ Section
\ref{sec:comparison} and Theorem \ref{thm:ch-vs-fh}.
\end{Remark}

\end{document}